\newcommand\BBC{\mathbb C}
\newcommand{\cyclic}[1]{Z_{#1}}
\newcommand\Ind{\operatorname{Ind}}
\newcommand{\symmetric}[1]{S_{#1}}
\newcommand{\GAP}{\textsf{GAP}}
\newcommand{\CHEVIE}{\textsf{CHEVIE}}
\newcommand{\ZigZag}{\textsf{ZigZag}}
\numberwithin{equation}{section}
\theoremstyle{plain}
\newtheorem{theorem}{Theorem}
\numberwithin{theorem}{section}
\newtheorem*{conjecture1}{Conjecture 1}
\newtheorem*{conjectureb}{Conjecture B}
\newtheorem*{conjecturec}{Conjecture C}
\theoremstyle{definition}
\newtheorem{remark}[theorem]{Remark}
\title[Computations for rank three and four Coxeter groups]
{Computations for Coxeter arrangements and Solomon's descent algebra:\\
 Groups of rank three and four}
\author[M.~Bishop]{Marcus Bishop} \address{M.B.: Fakult\"at f\"ur Mathematik\\
  Ruhr-Universit\"at Bochum\\ D-44780 Bochum, Germany}
\email{marcus.bishop@rub.de}
\author[J.~M.~Douglass]{J. Matthew Douglass} \address{J.M.D.: Department of
  Mathematics\\ University of North Texas\\ Denton TX, USA 76203}
\email{douglass@unt.edu}
\author[G.~Pfeiffer]{G\"otz Pfeiffer} \address{G.P.: School of Mathematics,
  Statistics and Applied Mathematics\\ National University of Ireland,
  Galway\\ University Road, Galway, Ireland}
\email{goetz.pfeiffer@nuigalway.ie}
\author[G.~R\"ohrle]{Gerhard R\"ohrle} \address{G.R.: Fakult\"at f\"ur
  Mathematik\\ Ruhr-Universit\"at Bochum\\ D-44780 Bochum, Germany}
\email{gerhard.roehrle@rub.de}
\keywords{Coxeter group, descent algebra, Orlik-Solomon algebra}
\subjclass[2010]{20F55, 20C15, 20C40, 52C35}
\begin{document}

\maketitle
\allowdisplaybreaks

\begin{abstract}
  In recent papers we have refined a conjecture of Lehrer and Solomon
  expressing the characters of a finite Coxeter group $W$ afforded by the
  homogeneous components of its Orlik-Solomon algebra as sums of characters
  induced from linear characters of centralizers of elements of $W$. Our
  refined conjecture also relates the Orlik-Solomon characters above to the
  terms of a decomposition of the regular character of $W$ related to the
  descent algebra of~$W$. A consequence of our conjecture is that both the
  regular character of $W$ and the character of the Orlik-Solomon algebra
  have parallel, graded decompositions as sums of characters induced from
  linear characters of centralizers of elements of $W$, one for each
  conjugacy class of elements of $W$. The refined conjecture has been proved
  for symmetric and dihedral groups. In this paper we develop algorithmic
  tools to prove the conjecture computationally for a given finite Coxeter
  group. We use these tools to verify the conjecture for all finite Coxeter
  groups of rank three and four, thus providing previously unknown
  decompositions of the regular characters and the Orlik-Solomon characters
  of these groups.
\end{abstract}


\section{Introduction}\label{IntroductionSection}

Let $W$ be a finite Coxeter group of rank $n$ and let $V$ be a finite
dimensional, complex vector space affording a faithful representation of $W$
such that each element in a Coxeter generating set $S$ of $W$ acts on $V$ as
a reflection. Let $M$ be the complement in $V$ of the union of the
fixed-point hyperplanes of the reflections in $W$. Then $M$ is a $W$-stable,
open subset of $V$ and the action of $W$ on $M$ determines a representation
of $W$ on $H^p(M)$, the $p^{\text{th}}$ singular cohomology group of
$M$. Let $\omega_W^p$ denote the character
of 
this representation and let $\omega_W= \sum_{p\geq 0} \omega_W^p$ denote the
character of the representation of $W$ on the cohomology ring $H^\bullet(M)=
\bigoplus_{p\ge 0} H^p(M)$.  The character $\omega_W$ has been computed by
Lehrer and others, see \cite{blairlehrer:cohomology} and
\cite{felderveselov:coxeter}.

\cite{lehrersolomon:symmetric} conjectured that $\omega_W^p$ is a sum of
characters induced from linear characters of centralizers of elements of
$W$.  Conjecture~1 below is a more precise version of the Lehrer-Solomon
conjecture that in addition to describing $\omega_W^p$ as a sum of induced
characters, also describes the regular character $\rho_W$ of $W$ in a
similar way.

\begin{conjecture1}[\cite{douglasspfeifferroehrle:coxeter},
  Conjecture~2.1]\label{conj1}

  Suppose that $W$ is a finite Coxeter group and that $\mathcal{R}$ is a set
  of conjugacy class representatives of $W$. Then for each $w\in\mathcal{R}$
  there exists a linear character $\varphi_w$ of $C_W(w)$ such
  that
  \[
  \rho_W= \sum_{w\in\mathcal{R}} \Ind_{C_W(w)}^W \varphi_w
  \quad\text{and}\quad\omega_W= \epsilon \sum_{w\in\mathcal{R}}
  \Ind_{C_W(w)}^W (\alpha_w \varphi_w )
  \]
  where $\epsilon$ is the sign character of $W$ and $\alpha_w$ is the
  composition of $\det$ with restriction to the $1$-eigenspace of $w$.
  Moreover, if $\mathcal R_p$ is the set of $w$ in $\mathcal R$ such that
  the codimension in $V$ of the $1$-eigenspace of $w$ is $p$, then
  \[
  \omega_W^p= \epsilon \sum_{w\in\mathcal{R}_p} \Ind_{C_W(w)}^W (\alpha_w
  \varphi_w ).
  \]
\end{conjecture1}
The main result in \cite{douglasspfeifferroehrle:coxeter} is a proof of
Conjecture~1 for symmetric groups. In
\cite{douglasspfeifferroehrle:inductive} we developed an inductive approach
that would lead to a proof of Conjecture~1. As an application of the method,
we proved Conjecture~1 for dihedral groups. In this paper we implement the
inductive procedure and give a computational proof that Conjecture~1 holds
for Coxeter groups with ranks three and four. The experimental evidence
provided by the computations in this paper suggests that cohomology encodes
as yet undiscovered symmetries of the complex manifolds that arise as
complements of Coxeter arrangements.

Explicitly computing characters $\varphi_{w}$ that satisfy the conclusion
of Conjecture~1 can in principle be accomplished by testing whether
\[
\rho_W=\sum_{w\in\mathcal{R}} \Ind_{C_W(w)}^W\varphi_w \quad \text{and}\quad
\omega_W^p= \epsilon \sum_{w\in\mathcal{R}_p} \Ind_{C_W(w)}^W (\alpha_w
  \varphi_w ) \quad (0\leq p\leq n)
\]
for each tuple $(\varphi_w)_{w\in\mathcal{R}}$ of linear characters of
$C_W(w)$ for $w$ in $\mathcal R$.  The inductive approach parses
Conjecture~1 into distinct, more manageable components known as
Conjectures~B and~C, which we now describe.  For a subset $L$ of $S$ denote
the parabolic subgroup $\langle L \rangle$ of $W$ by $W_L$. A conjugacy
class $\mathcal C$ in $W$ is called \emph{cuspidal} if $\mathcal C\cap
W_L=\emptyset$ for every proper subset $L$ of $S$.  Then Conjecture~B
describes the character $\omega_W^n$ of $W$ as a sum of characters induced
from centralizers of cuspidal conjugacy classes. Note that $\omega_W^n$ is
the highest degree nonzero component of $\omega_W$, because $H^n(M)$ is the
highest degree non-vanishing cohomology group of~$M$.  Furthermore,
Conjecture~B also relates $\omega_W^n$ to an appropriate term $\rho_W^n$ of
a decomposition of the regular character of~$W$ arising from a complete
set of primitive orthogonal idempotents of the descent algebra of $W$.

\begin{conjectureb}[\cite{douglasspfeifferroehrle:inductive}, Conjecture~B]
  Suppose that $W$ is a finite Coxeter group and that
  $\mathcal{C}$ is a set of representatives of the cuspidal conjugacy
  classes of $W$. Then for each $w\in\mathcal{C}$ there exists a linear
  character $\varphi_w$ of $C_W(w)$ such that
  \[
  \rho_W^n=\sum_{w\in\mathcal{C}} \Ind_{C_W(w)}^W \varphi_w =\epsilon
  \omega_W^n.
  \]
\end{conjectureb}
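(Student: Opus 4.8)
The plan is to fix the group $W$ and reduce the statement to a terminating finite computation, keeping in mind that there are really two assertions to establish: the character identity $\rho_W^n=\epsilon\omega_W^n$, and the existence of a single tuple $(\varphi_w)_{w\in\mathcal{C}}$ of linear characters that realizes this common character as $\sum_{w\in\mathcal{C}}\Ind_{C_W(w)}^W\varphi_w$. First I would assemble the data attached to the right-hand side: compute the cuspidal conjugacy classes of $W$, that is, those classes $\mathcal{C}$ with $\mathcal{C}\cap W_L=\emptyset$ for every proper $L\subsetneq S$, equivalently (by Steinberg's theorem on pointwise stabilizers) the classes of elements with no nonzero fixed vector in $V$; and for each representative $w$ determine $C_W(w)$ together with its group of linear characters, i.e.\ the irreducible characters of the abelianization $C_W(w)/[C_W(w),C_W(w)]$.

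Next I would compute the two target characters independently. The character $\omega_W^n$ is the top homogeneous component of $\omega_W$, whose values are known from the work of Lehrer and others cited above, so $\epsilon\omega_W^n$ is obtained by a sign twist. The character $\rho_W^n$ I would extract from the decomposition $\rho_W=\sum_{p}\rho_W^p$ of the regular character afforded by a complete set of primitive orthogonal idempotents of the descent algebra of $W$, graded so that the degree-$p$ term matches the codimension-$p$ stratum as in Conjecture~1; the top term $\rho_W^n$ is then supported on the elliptic classes, which are exactly the cuspidal ones. With both characters in hand, the identity $\rho_W^n=\epsilon\omega_W^n$ becomes a comparison of two explicit class functions and can be checked entry by entry.

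The substantive step is the decomposition itself. For each cuspidal $w$ and each linear character $\varphi$ of $C_W(w)$ I would form the induced character $\Ind_{C_W(w)}^W\varphi$ and record it as a nonnegative integer vector in the basis $\operatorname{Irr}(W)$; the task is then to select exactly one $\varphi_w$ per class $w\in\mathcal{C}$ so that the corresponding induced characters sum to the common target. This is not plain linear algebra: the candidate induced characters are in general linearly dependent, precisely one summand is required for each cuspidal class, and every coefficient of the resulting difference must vanish. I would therefore organize it as a backtracking or constraint-satisfaction search over the finitely many tuples $(\varphi_w)_{w\in\mathcal{C}}$ and solve it algorithmically for each $W$ of rank three and four.

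The main obstacle is this last existence-and-selection step. There is no uniform argument forcing a solution to exist, the number of candidate tuples $\prod_{w\in\mathcal{C}}|C_W(w)/[C_W(w),C_W(w)]|$ can be large, and any solution must satisfy all three requirements at once, namely one linear character per class, agreement with $\rho_W^n$, and agreement with $\epsilon\omega_W^n$. A secondary difficulty is delicate bookkeeping: one must fix compatible class representatives and centralizer presentations so that the descent-algebra computation of $\rho_W^n$ and the induction computations refer to literally the same subgroups. Once these are aligned, the verification for each individual group is a finite check that either exhibits the required tuple or rules the conjecture out.
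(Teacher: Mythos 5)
Your proposal is correct and follows essentially the same route as the paper: compute the cuspidal classes and centralizers, compute $\rho_W^n$ from the descent-algebra idempotent $e_S$ and $\omega_W^n$ for the top component of the Orlik--Solomon algebra, and then search over tuples of linear characters of the centralizers so that the induced characters sum to the common target --- the paper formalizes your backtracking step as an ``exact packing problem'' solved by a recursive tree search, and likewise restricts the computation to rank three and four. The only cosmetic difference is that the paper computes $\omega_W^n$ directly via the non-broken circuit basis rather than reading it off from published values of $\omega_W$.
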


Conjecture~C is a relative version of Conjecture~B for the pair $(W,W_L)$,
where $W_L$ is a fixed parabolic subgroup of rank $r$ and the overgroup $W$
varies.  It mirrors Conjecture~B for the group $W_L$, but also encodes
information about the embedding of $W_L$ in its normalizer. The characters
$\rho^r_{W_L}$ and $\omega^r_{W_L}$ are replaced by extensions
$\widetilde{\rho^r_L}$ and $\widetilde{\omega^r_L}$ to $N_W(W_L)$. The
extensions $\widetilde{\rho^r_L}$ and $\widetilde{\omega^r_L}$ are discussed
in \S\ref{AlgebrasSection}.

\begin{conjecturec}[\cite{douglasspfeifferroehrle:inductive}, Conjecture~C]
  Suppose that $W$ is a finite Coxeter group, that $W_L$ is a proper,
  parabolic subgroup of $W$ with rank $r$, and that $\mathcal{C}_L$ is a set
  of representatives of the cuspidal conjugacy classes of $W_L$. Then for
  each $w\in\mathcal{C}_L$ the linear character $\varphi_w$ of $C_{W_L}(w)$
  in Conjecture~B extends to a linear character $\widetilde{\varphi_w}$ of
  $C_W(w)$ such that
  \[
  \widetilde{\rho_L^r}=\sum_{w\in\mathcal{C}_L} \Ind_{C_W(w)}^{N_W(W_L)}
  \widetilde{\varphi_w} =\epsilon\alpha_L \widetilde{\omega_L^r},
  \]
  where $\alpha_L$ is the composition of $\det$ with restriction to the
  subspace of fixed points of $W_L$.
\end{conjecturec}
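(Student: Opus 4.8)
\emph{Strategy.} The plan is to reduce Conjecture~C to a finite verification carried out one pair $(W,W_L)$ at a time with the algorithmic tools developed here. Since $W$ has rank three or four and $W_L$ is proper, $W_L$ has rank $r\le 3$, so Conjecture~B is already available for $W_L$ and supplies the linear characters $\varphi_w$ on $C_{W_L}(w)$ for $w\in\mathcal{C}_L$. The geometric input that makes the statement well posed is that a class of $W_L$ is cuspidal exactly when its elements satisfy $\ker(w-1)=V^{W_L}$, the fixed space of $W_L$ on $V$. First I would record two consequences of this. Any $g\in C_W(w)$ preserves $\ker(w-1)=V^{W_L}$ and hence normalizes its pointwise stabilizer $W_L$, so $C_W(w)\subseteq N_W(W_L)$ and the induction $\Ind_{C_W(w)}^{N_W(W_L)}\widetilde{\varphi_w}$ is defined; and, the $1$-eigenspace of $w$ being precisely $V^{W_L}$, the character $\alpha_w$ of Conjecture~1 restricts on $C_W(w)$ to $\alpha_L$.

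\emph{The extensions.} Next I would construct $\widetilde{\varphi_w}$. Being linear, $\varphi_w$ extends to a linear character of $C_W(w)$ if and only if it is trivial on $C_{W_L}(w)\cap[C_W(w),C_W(w)]$; I would check this vanishing from the computed presentation of the centralizers, after which the set of extensions is a torsor under $\operatorname{Irr}\bigl(C_W(w)/C_{W_L}(w)\,[C_W(w),C_W(w)]\bigr)$ and can be enumerated explicitly. The correct extension is singled out by the demand that the induced sum realise the characters $\widetilde{\rho_L^r}$ and $\widetilde{\omega_L^r}$ built in \S\ref{AlgebrasSection} from the descent algebra and the Orlik--Solomon algebra of $W_L$.

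\emph{The identity.} With the $\widetilde{\varphi_w}$ in hand I would verify the two equalities by restricting along $W_L\subseteq N_W(W_L)$. By Mackey's formula the restriction of $\Ind_{C_W(w)}^{N_W(W_L)}\widetilde{\varphi_w}$ to $W_L$ decomposes over the double cosets $W_L\backslash N_W(W_L)/C_W(w)$; using $C_{W_L}(w)=C_W(w)\cap W_L$ and the splitting $N_W(W_L)=W_L\rtimes N_L$ of Howlett, this collapses to $\Ind_{C_{W_L}(w)}^{W_L}\varphi_w$ and its translates under $N_L$. Summed over $\mathcal{C}_L$, the first equality of Conjecture~C therefore restricts to the first equality of Conjecture~B for $W_L$, so that its genuinely new content is the compatibility of the chosen extensions with the action of $N_L$ on the cuspidal classes and their centralizers. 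For the second equality, the identification $\alpha_w=\alpha_L$ on $C_W(w)$ and the push--pull identity $\alpha_L\cdot\Ind_{C_W(w)}^{N_W(W_L)}\widetilde{\varphi_w}=\Ind_{C_W(w)}^{N_W(W_L)}(\alpha_w\widetilde{\varphi_w})$ reconcile the untwisted induced sum of Conjecture~C with the twisted form of Conjecture~1; since $\alpha_L$ is trivial on $W_L$ and $\epsilon$ restricts to the sign character of $W_L$, restricting to $W_L$ again recovers the second equality of Conjecture~B.

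\emph{Main obstacle.} The hard part is the passage to the normalizer. One must choose the $\varphi_w$ coherently across each $N_L$-orbit of cuspidal classes and exhibit extensions whose induced sum matches the particular characters $\widetilde{\rho_L^r}$ and $\widetilde{\omega_L^r}$, rather than merely restricting correctly to $W_L$. I do not expect a uniform, case-free argument for this compatibility, since there is no closed-form description either of the extensions or of the action of $N_L$; accordingly I would settle it by direct computation for each of the finitely many pairs $(W,W_L)$ with $W$ of rank three or four, which is exactly what the algorithmic machinery of the paper is designed to carry out.
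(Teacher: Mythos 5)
Your outline is mathematically sound: the observation that cuspidality of $w$ in $W_L$ forces $\ker(w-1)=V^{W_L}$, hence $C_W(w)\subseteq N_W(W_L)$ and $\alpha_w|_{C_W(w)}=\alpha_L$; the extension criterion for linear characters; and the Mackey restriction to $W_L$ are all correct, and you correctly locate the genuine difficulty in the compatibility of the extensions with the $N_W(W_L)/W_L$-action. But the paper's proof of Theorem~\ref{thm:rel} goes a different way. Rather than running a computation for every pair $(W,W_L)$, it first reduces to irreducible $W$, then invokes three prior theoretical results: Conjecture~C is already proved in \cite{douglasspfeifferroehrle:inductive} whenever $|L|\le 2$ or $W_L$ is \emph{bulky} (i.e.\ has a normal complement in $N_W(W_L)$), and in \cite[\S7]{douglasspfeifferroehrle:coxeter} whenever all factors of $W_L$ have type $A$. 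Consulting the classification of bulky parabolics (Appendix~\ref{a:bulky}), this disposes of every rank-three case and all but eight rank-four pairs, of which seven have all factors of type $A$; only $(H_4, I_2(5)A_1)$ remains for direct verification. Your approach buys uniformity and would scale mechanically; the paper's buys economy --- essentially no computation --- at the price of leaning on structural results about normalizers of parabolic subgroups. One caveat about your plan as stated: the algorithms of \S\ref{ImplementationSection} (descent-algebra idempotents, non-broken circuit bases, exact packing) are built to verify Conjecture~B, not Conjecture~C; to execute your strategy one would additionally need to compute the extended characters $\widetilde{\rho_L^r}$ and $\widetilde{\omega_L^r}$ on $N_W(W_L)$ and search over the torsor of extensions $\widetilde{\varphi_w}$, which the paper does not implement.
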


It is shown in \cite{douglasspfeifferroehrle:inductive} that if Conjecture~B
holds for $W$ and Conjecture~C holds for all pairs $\left(W,W_L\right)$
where $W_L$ is a proper, parabolic subgroup, then Conjecture~1 holds for
$W$.

{}From a computational standpoint, proving Conjecture~B for a given group $W$
requires solving three separate problems.
\begin{enumerate}
\item How can the character $\rho_W^n$ be computed?
\item How can the character $\omega_W^n$ be computed?
\item How can linear characters $\varphi_{w}$ for $w$ in $\mathcal{C}$ be
  found so that the first equality in Conjecture~B holds?
\end{enumerate}
In \S\ref{ImplementationSection} we present methods to solve these problems
using the \GAP\ programming system \cite{gap3} with the \CHEVIE\
\cite{chevie} and \ZigZag\ \cite{zigzag} packages. This includes new
algorithms for computing with non-broken circuit bases of Orlik-Solomon
algebras of arbitrary hyperplane arrangements. These algorithms have been
added to the \ZigZag\ package developed and maintained by the second author.

It turns out that for groups of rank at most four, Conjecture~C may be
proved directly, as discussed in \S\ref{sec:prelim}. Thus, as a
consequence of our computations, we can state the following theorem.

\begin{theorem}\label{thm:all}
  Conjecture~1 holds for all finite Coxeter groups of rank at most four.
\end{theorem}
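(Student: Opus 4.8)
The plan is to combine the inductive reduction from \cite{douglasspfeifferroehrle:inductive} with a finite case-by-case verification. Recall that \cite{douglasspfeifferroehrle:inductive} shows that Conjecture~1 holds for $W$ provided that Conjecture~B holds for $W$ and Conjecture~C holds for every pair $(W,W_L)$ with $W_L$ a proper parabolic subgroup. It therefore suffices to establish Conjectures~B and~C for every finite Coxeter group $W$ of rank at most four, and to arrange these verifications coherently, so that the linear characters $\varphi_w$ chosen in Conjecture~B for a group are exactly the ones required to extend in Conjecture~C for every pair in which that group occurs as a parabolic subgroup.

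First I would enumerate the groups involved. Every finite Coxeter group of rank at most four is a direct product of irreducible factors drawn from the finite list consisting of $A_1$; the dihedral groups $I_2(m)$ (including $A_2$, $B_2$, $G_2$) in rank two; $A_3$, $B_3$, $H_3$ in rank three; and $A_4$, $B_4$, $D_4$, $F_4$, $H_4$ in rank four. Conjecture~1, and in particular Conjecture~B, is already known for symmetric groups by \cite{douglasspfeifferroehrle:coxeter} and for dihedral groups by \cite{douglasspfeifferroehrle:inductive}, which disposes of $A_1=\symmetric{2}$, $A_2=\symmetric{3}$, $A_3=\symmetric{4}$, $A_4=\symmetric{5}$, and every $I_2(m)$. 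What remains is to treat $B_3$, $H_3$, $B_4$, $D_4$, $F_4$, $H_4$, together with the reducible groups assembled from these irreducible factors.

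For Conjecture~C I would proceed directly, as in \S\ref{sec:prelim}: when $W$ has rank at most four, the proper parabolic subgroups $W_L$ have rank at most three, so the normalizers $N_W(W_L)$ and the relevant centralizers are small and explicit, and the extension of each $\varphi_w$ together with the identity $\widetilde{\rho_L^r}=\epsilon\alpha_L\widetilde{\omega_L^r}$ can be checked for each pair. For Conjecture~B I would solve the three problems listed above with the \GAP/\CHEVIE/\ZigZag\ toolchain of \S\ref{ImplementationSection}: compute $\rho_W^n$ from a complete set of primitive orthogonal idempotents of the descent algebra, compute $\omega_W^n$ from a non-broken circuit basis of the top component of the Orlik-Solomon algebra, and then search for a tuple $(\varphi_w)_{w\in\mathcal C}$ of linear characters realizing the first equality of Conjecture~B, recording these characters for reuse in Conjecture~C. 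Direct products would be handled by a compatibility argument factoring $\rho_W$, $\omega_W$, the centralizers, and the linear characters across the irreducible components, reducing the reducible cases to the irreducible ones.

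The main obstacle I anticipate is the search for the characters $\varphi_w$ in Conjecture~B for the largest groups, above all $H_4$ (of order $14400$) and $F_4$, where both the computation of $\omega_W^n$ from the Orlik-Solomon algebra and the combinatorial search over tuples of linear characters of the centralizers are expensive; making this tractable is precisely the role of the non-broken circuit algorithms added to \ZigZag\ and of a careful organization of the character search. Once Conjectures~B and~C are verified for every group on the list, the reduction of \cite{douglasspfeifferroehrle:inductive}, applied inductively on the rank so that Conjecture~B for each proper parabolic subgroup is available when Conjecture~C for the pair is checked, yields Conjecture~1 for every finite Coxeter group of rank at most four.
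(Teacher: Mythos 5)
Your proposal follows the same architecture as the paper's proof: reduce Conjecture~1 to Conjectures~B and~C via the result of \cite{douglasspfeifferroehrle:inductive}, verify Conjecture~B computationally for the irreducible groups $B_3$, $H_3$, $B_4$, $D_4$, $F_4$, $H_4$ (symmetric and dihedral groups being already known), handle reducible groups by factoring everything across the irreducible components, and establish Conjecture~C for every pair $(W,W_L)$. The one point where the paper is more economical is Conjecture~C: rather than checking every pair directly as you propose, it invokes the general results that the conjecture holds whenever $|L|\leq 2$, or $W_L$ is bulky, or all factors of $W_L$ are of type $A$, which reduces the verification to the single pair $(H_4, I_2(5)A_1)$ --- but your direct check would also succeed.
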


Some of our current methods are sufficient to treat Conjecture~B for
somewhat larger groups, but are computationally too expensive to be able to
handle the largest exceptional Coxeter groups. In addition, as the rank
increases, the computations needed to verify Conjecture~C become more
complicated. In future work we hope to develop additional computational
techniques to be able to efficiently verify the conjectures for groups of
rank up to eight, thus verifying the conjectures for all exceptional Coxeter
groups.

The rest of this paper is organized as follows. In \S\ref{sec:prelim} we
review the constructions from \cite{douglasspfeifferroehrle:coxeter} and
\cite{douglasspfeifferroehrle:inductive} and show how our computations lead
to a proof of Theorem~\ref{thm:all}. In \S\ref{ImplementationSection} we
describe the algorithms to solve the three computational problems listed
above and their implementation in \GAP. In \S\ref{sec:results} we present
the results of our computations for Conjecture~B for rank three and four
Coxeter groups. We include somewhat more information about centralizers and
the linear characters $\varphi_w$ than is necessary to prove
Conjecture~1. This extra information can be used to test hypotheses about
how the characters $\varphi_w$ might be chosen for an arbitrary Coxeter
group. Finally, Conjecture~C is known to hold if $W_L$ is a bulky parabolic
subgroup of $W$ for which Conjecture~B holds. In the appendix we list the
all the bulky parabolic subgroups of finite, irreducible, Coxeter groups.

\section{Preliminaries and Main Results}\label{sec:prelim}

\subsection{Coxeter Groups and the Orlik-Solomon Algebra} 
\label{AlgebrasSection}

In this subsection we briefly review the constructions in
\cite{douglasspfeifferroehrle:coxeter} and
\cite{douglasspfeifferroehrle:inductive}. Recall that an element of $W$ is
called \emph{cuspidal} if none of its conjugates lies in a proper parabolic
subgroup of $W$. A conjugacy class is called \emph{cuspidal} if its elements
are all cuspidal. It follows from the fact that the proper parabolic
subgroups of $W$ arise as pointwise stabilizers of proper subspaces of $V$
that an element is cuspidal if and only if its $1$-eigenspace has
codimension $|S|$ in $V$.  It is shown in \cite{geckpfeiffer:characters}
that up to the natural action of $W$, the conjugacy classes in $W$ are
parameterized by pairs $(W_1, \mathcal C_1),$ where $W_1$ is a parabolic
subgroup of $W$ and $\mathcal C_1$ is a cuspidal conjugacy class in $W_1$.

Let $T=\{\,w^{-1}sw\mid s\in S,\, w\in W\,\}$ be the set of reflections in
$W$. For $t$ in $T$ let $H_t$ be the hyperplane in $V$ fixed by $t$. Let $E$
be a $\BBC$-vector space with basis $\{\,e_t\mid t\in T\,\}$. The
\emph{Orlik-Solomon algebra} $A(W)$ is the quotient of the exterior algebra
of $E$ by the ideal generated by elements of the form
\begin{equation}\label{ARelation}
  \sum_{i=1}^m(-1)^ie_{t_1}e_{t_2}\cdots\widehat{e_{t_i}} \cdots e_{t_m} 
\end{equation}
for every set $\{H_{t_1},H_{t_2},\dots,H_{t_m}\}$ of linearly dependent
hyperplanes. The group $W$ acts on the exterior algebra by $s\cdot
e_t=e_{sts}$ for $s\in S$ and $t\in T$. The ideal generated by elements of
the form (\ref{ARelation}) is homogeneous and $W$-stable, and so $A(W) =
\bigoplus_{p\ge 0} A^p(W)$ is a graded, skew-commutative $\BBC$-algebra on
which $W$ acts as algebra automorphisms. We denote the image of the
generator $e_t$ in $A(W)$ by $a_t$.

It is known that $A(W)$ is isomorphic to the cohomology ring $H^\bullet(M)$
as graded $W$-algebras \cite[Chapter 3]{orlikterao:arrangements} and that
$A^p(W)= H^p(M)=0$ for $p>n$.  We refer to $A^n(W)$ as the {\em top
  component} of $A(W)$.  It is shown in \cite{douglass:cohomology} that the
degree of the character $\omega_W^n$ afforded by $A^n(W)$ is the cardinality
of the set of cuspidal elements in $W$. It is known that the number of such
elements is the product $e_1\cdots e_n$, where $e_1, \ldots, e_n$ are the
exponents of $W$. Because $|W|= (e_1+1) \dotsm (e_n+1)$ we see that the
characters $\rho_W^n$ and $\omega_W^n$ have degree roughly equal $|W|/n$.

For a subset $J$ of $S$, let $X_J$ denote the set of minimal length right
coset representatives of $W_J$ in $W$ and let $x_J$ be the element
$\sum_{w\in X_J} w$ of the group algebra $\BBC W$. Solomon has shown that
the set $\{\, x_J\mid J\subseteq S\,\}$ is linearly independent and spans a
subalgebra of $\BBC W$ called the \emph{descent algebra} of $W$ (see
\cite{bergeronbergeronhowletttaylor:decomposition}).

Bergeron, Bergeron, Howlett, and Taylor
\cite[\S7]{bergeronbergeronhowletttaylor:decomposition} define a basis of
the descent algebra consisting of quasi-idempotents as follows. For subsets
$J$ and $K$ of $S$ define
\begin{align}\label{eq:mKJ}
  m_{KJ}= \left| \{\, x\in X_J\mid x^{-1}Jx\subseteq K\,\} \right|
  \quad\text{if}\quad J\subseteq K \quad\text{and}\quad m_{KJ}=0
  \quad\text{if}\quad J\not\subseteq K.
\end{align}

Note that $m_{KK}>0$ since $1_W\in X_K$ for all $K\subseteq S$.  Then the
$2^n\times 2^n$ matrix with rows and columns indexed by the power set of $S$
in some fixed non-increasing order and with $(K,J)$-entry $m_{KJ}$ is
lower-triangular with non-zero diagonal entries, and hence invertible.
Define $n_{KJ}$ to be the $(K,J)$-entry of the inverse matrix and put
$e_K=\sum_J n_{KJ} x_J$. Then $e_Ke_K= \gamma_Ke_K$, where $\gamma_K=|\{\,
L\subseteq S\mid \exists w\in W,\, w^{-1}Lw=K\,\}|$ and so each $e_K$ is a
quasi-idempotent in $\BBC W$. In particular, $e_S=\sum_J n_{SJ} x_J$ is an
idempotent.  By analogy with $A^n(W)$ we call $\BBC We_S$ the \emph{top
  component} of $\BBC W$ and denote the character it affords by $\rho_W^n$.
It is shown in \cite{bergeronbergeronhowletttaylor:decomposition} that the
degree of $\rho_W^n$ is the cardinality of the set of cuspidal elements in
$W$.

\begin{remark}\label{Irreducible}
  If $W=W_1\times W_2$ is reducible, then an element $(w_1,w_2)$ in
  $W_1\times W_2$ is cuspidal if and only if $w_1$ is cuspidal in $W_1$ and
  $w_2$ is cuspidal in $W_2$. It is straightforward to show that the
  idempotent generating the top component of $\BBC W$ is the product of the
  idempotents generating the top components of $\BBC W_1$ and $\BBC
  W_2$. Therefore, the top component of $\BBC W$ is isomorphic to the tensor
  product of the top components of $\BBC W_1$ and $\BBC W_2$.  Similarly,
  the top component of $A(W)$ is isomorphic to the tensor product of the top
  components of $A(W_1)$ and $A(W_2)$ by the K\"unneth theorem.
\end{remark}

\subsection{Main Results}
\label{sec:main-results}

In this subsection we state the main result (Theorem~\ref{thm:main}) of this
paper and show how it leads to a proof of Theorem~\ref{thm:all}.

\begin{theorem}\label{thm:main}
  Conjecture~B holds for all finite Coxeter groups of rank $n\le 4$.
\end{theorem}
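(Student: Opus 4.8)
The plan is to prove Theorem~\ref{thm:main} by explicit computation, reducing the verification of Conjecture~B for each group of rank at most four to the three concrete computational problems identified in the introduction. First I would observe that by Remark~\ref{Irreducible}, the top components of $\BBC W$ and $A(W)$ factor as tensor products over the irreducible direct factors of $W$, and the cuspidal classes of a reducible group decompose correspondingly; since the sign character $\epsilon$ and the determinant-based character $\alpha_w$ also respect direct products, Conjecture~B for a reducible $W$ follows from Conjecture~B for its irreducible factors. Thus it suffices to verify the conjecture for each \emph{irreducible} finite Coxeter group of rank at most four, that is, for $A_1, A_2, A_3, A_4, B_2, B_3, B_4, D_4, H_3, H_4, F_4, I_2(m)$, together with the relevant low-rank reducible products.

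Next, for each such irreducible group $W$ I would solve the three problems in turn. To compute $\rho_W^n$, I would build the descent algebra basis elements $x_J$ for $J\subseteq S$, assemble the matrix $(m_{KJ})$ from~\eqref{eq:mKJ}, invert it to obtain the $n_{SJ}$, form the idempotent $e_S=\sum_J n_{SJ}x_J$, and compute the character afforded by the left ideal $\BBC We_S$; this is directly implementable using \ZigZag\ together with \CHEVIE. To compute $\omega_W^n$, I would construct the Orlik-Solomon algebra $A(W)$ via a non-broken-circuit basis of its top component $A^n(W)$ and extract the character of the resulting $W$-representation; this is where the new \ZigZag\ algorithms for non-broken-circuit bases enter. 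Finally, to produce the linear characters $\varphi_w$ for $w$ ranging over a set $\mathcal{C}$ of cuspidal class representatives, I would enumerate the finitely many tuples of linear characters of the centralizers $C_W(w)$ and search for one satisfying $\rho_W^n=\sum_{w\in\mathcal{C}}\Ind_{C_W(w)}^W\varphi_w$, simultaneously checking the identity $\rho_W^n=\epsilon\,\omega_W^n$ predicted by the conjecture.

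The main obstacle I expect is computational rather than conceptual: the group $H_4$, with order $14400$, has a large top component (of degree roughly $|W|/n$), and both constructing a non-broken-circuit basis of $A^n(W)$ for an arrangement with many hyperplanes and searching over tuples of linear characters of the cuspidal centralizers become expensive. Controlling the combinatorial explosion in the non-broken-circuit computation for the rank-four exceptional groups, and keeping the character search tractable, is the crux; the efficiency of the new \ZigZag\ routines is precisely what makes the rank-four cases feasible. For the lower-rank and dihedral groups the computations are comparatively routine, and the dihedral case $I_2(m)$ is already covered by prior work. Once $\varphi_w$ satisfying the first equality are found for every irreducible group in the list and the equality with $\epsilon\,\omega_W^n$ is confirmed, Conjecture~B holds for all irreducible groups of rank at most four, and hence, by the reduction in the first step, for all finite Coxeter groups of rank $n\le 4$, which is the assertion of Theorem~\ref{thm:main}.
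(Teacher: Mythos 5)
Your proposal is correct and follows essentially the same route as the paper: reduce to the irreducible case via Remark~\ref{Irreducible}, cite prior work for types $A$ and $I_2(m)$, and for the remaining irreducible groups $B_3$, $H_3$, $B_4$, $D_4$, $F_4$, $H_4$ compute $\rho_W^n$ from the idempotent $e_S$, compute $\omega_W^n$ via a non-broken-circuit basis of $A^n(W)$, and search for the tuple $(\varphi_w)$ of linear characters — the paper organizes this last search as an exact packing problem to keep it tractable, which is the same idea as your brute-force enumeration but with the efficiency concern you flag addressed explicitly.
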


Observe that if $W=W_1\times W_2$ is reducible, then since induction
commutes with outer tensor products, Remark~\ref{Irreducible} implies that
Conjecture~B holds for $W$ if and only if it holds for both $W_1$ and $W_2$
where the characters $\varphi_w$ satisfying Conjecture~B for $W$ are the
tensor products of those satisfying the conjecture for $W_1$ and $W_2$. Thus
it suffices to consider the case when $W$ is irreducible.

Conjecture~B has been proved with no restriction on the rank of $W$ for
symmetric groups in \cite{douglasspfeifferroehrle:coxeter} and dihedral
groups in \cite{douglasspfeifferroehrle:inductive}. In this paper we prove
the conjecture for the remaining finite irreducible Coxeter groups of rank
three or four in \S\ref{sec:results} by explicitly computing the linear
characters $\varphi_w$.  A description of the \GAP\ programs used in this
calculation is given in \S\ref{ImplementationSection}.

To prove Conjecture~1 for a particular Coxeter group $W$, we require a
linear character of the centralizer of a representative of every conjugacy
class of~$W$.  For cuspidal classes, we can use the characters satisfying
Conjecture~B.  For non-cuspidal conjugacy classes, these characters are
provided by Conjecture~C in the manner which we now describe.

Let $L$ be a subset of $S$ of size $r$.  Then $W_L$ acts on the top
components of $A(W_L)$ and $\BBC W_L$ and we denote the characters of $W_L$
afforded by these spaces by $\omega_L^r$ and $\rho_L^r$ rather than by
$\omega_{W_L}^r$ and $\rho_{W_L}^r$ to simplify notation.  Suppose that
Conjecture~B holds for $W_L$. Then for each $w$ in a set $\mathcal{C}_L$ of
representatives of the cuspidal conjugacy classes of $W_L$ we have a linear
character $\varphi_w$ of $C_{W_L}(w)$ such that
\[
\rho_L^r=\sum_{w\in\mathcal{C}_L} \Ind_{C_{W_L} (w)}^{W_L} \varphi_w
=\epsilon \omega_{L}^r.
\]
It is shown in \cite{konvalinkapfeifferroever:centralizers} that if $w$ is a
cuspidal element in $W_L$, then $C_W(w)$ is contained in $N_W(W_L)$ and the
quotient groups $C_W(w)/C_{W_L}(w)$ and $N_W(W_L)/W_L$ are isomorphic. It is
shown in \cite{douglasspfeifferroehrle:coxeter} that the characters
$\rho_L^r$ and $\omega_{L}^r$ of $W_L$ extend to characters
$\widetilde{\rho_L^r}$ and $\widetilde{\omega_L^r}$ of $N_W(W_L)$. Then
Conjecture~C asserts that there is a corresponding extension of each
character $\varphi_w$ to $C_W(w)$ for which a corresponding statement about
$\widetilde{\rho_L^r}$ and $\widetilde{\omega_L^r}$ holds.  The characters
and subgroups in Conjecture~C are summarized in the following diagram.
\[\xymatrix@=10pt{ &N_W(W_L),\ \widetilde{\varphi_w}^{N_W(W_L)} \ar@{-}[dl]
  \ar@{-}[dr] &  \\
  C_W(w),\ \widetilde{\varphi_w} \ar@{-}[dr]&& W_L,\ \varphi_w^{W_L}
  \ar@{-}[dl] \\
  &C_{W_L}(w),\ \varphi_w&}\]
The statement that Conjecture~C holds for all cases occurring in
Theorem~\ref{thm:all} is the following.

\begin{theorem}\label{thm:rel}
  Suppose that $W$ is a finite Coxeter group of rank at most four and that
  $L$ is a proper subset of $S$.  Then Conjecture~C holds for the pair $(W,
  W_L)$.
\end{theorem}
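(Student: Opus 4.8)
The plan is to prove Conjecture~C by a finite, case-by-case verification over all pairs $(W,W_L)$ with $W$ irreducible of rank at most four and $W_L$ a proper parabolic subgroup. By the same reducibility argument used for Theorem~\ref{thm:main} (induction commutes with outer tensor products, together with Remark~\ref{Irreducible}), it suffices to treat the irreducible $W$. Since Conjecture~1 for $W$ follows once Conjecture~B holds for $W$ and Conjecture~C holds for all pairs $(W,W_L)$, and since Theorem~\ref{thm:main} already supplies Conjecture~B, establishing Theorem~\ref{thm:rel} completes the proof of Theorem~\ref{thm:all}. The key structural input is the result of \cite{konvalinkapfeifferroever:centralizers}: for a cuspidal $w\in W_L$, one has $C_W(w)\subseteq N_W(W_L)$ and $C_W(w)/C_{W_L}(w)\cong N_W(W_L)/W_L$. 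Thus for each fixed $L$ the extension problem for every $\varphi_w$ is governed by a single quotient group $N_W(W_L)/W_L$, which for rank $\le 4$ is small and explicitly computable.

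First I would fix $W$ irreducible of rank $n\le 4$ and enumerate, up to $W$-conjugacy, the proper parabolic subgroups $W_L$ together with their types. For each such $W_L$ I would invoke Conjecture~B for $W_L$ (valid since $W_L$ has rank at most three) to obtain the family $(\varphi_w)_{w\in\mathcal{C}_L}$ of linear characters of $C_{W_L}(w)$ satisfying $\rho_L^r=\sum_{w}\Ind_{C_{W_L}(w)}^{W_L}\varphi_w=\epsilon\,\omega_L^r$. Next I would compute the normalizer quotient $\Gamma_L:=N_W(W_L)/W_L$ and the extensions $\widetilde{\rho_L^r}$ and $\widetilde{\omega_L^r}$ to $N_W(W_L)$ constructed in \cite{douglasspfeifferroehrle:coxeter}, as reviewed in \S\ref{AlgebrasSection}.

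The heart of the argument is then, for each cuspidal $w\in\mathcal{C}_L$, to exhibit a linear extension $\widetilde{\varphi_w}$ of $\varphi_w$ from $C_{W_L}(w)$ to $C_W(w)$ such that the identity
\[
\widetilde{\rho_L^r}=\sum_{w\in\mathcal{C}_L}\Ind_{C_W(w)}^{N_W(W_L)}\widetilde{\varphi_w}=\epsilon\alpha_L\,\widetilde{\omega_L^r}
\]
holds in the character ring of $N_W(W_L)$. Because $C_W(w)/C_{W_L}(w)\cong\Gamma_L$ is abelian of small order in these ranks, a linear character of $C_{W_L}(w)$ has only finitely many linear extensions to $C_W(w)$, parameterized by the character group of $\Gamma_L$; I would search this finite set and select extensions realizing the required equality. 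In many cases the extension is forced by compatibility with the action on the top component and by the determinant twist $\alpha_L$, and one verifies the displayed identity by direct character computation using the \GAP/\CHEVIE/\ZigZag\ machinery of \S\ref{ImplementationSection}.

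\textbf{The main obstacle I anticipate} is controlling the extensions when $\Gamma_L=N_W(W_L)/W_L$ is nontrivial and acts nontrivially on the set $\mathcal{C}_L$ of cuspidal classes of $W_L$, since then the individual $\varphi_w$ need not each extend in isolation and one must instead match $\Gamma_L$-orbits of cuspidal classes against the graded pieces of $\widetilde{\rho_L^r}$ and $\widetilde{\omega_L^r}$, with the determinant factor $\alpha_L$ on the fixed-point subspace of $W_L$ entering the sign bookkeeping. Verifying the compatibility of the extension with $\alpha_L$ across an entire $\Gamma_L$-orbit, rather than class by class, is where the delicate matching occurs; for rank $\le 4$ the quotients $\Gamma_L$ are nevertheless small enough that this can be resolved directly, as indicated in \S\ref{sec:prelim}, completing the verification.
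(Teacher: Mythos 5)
Your plan---a direct finite verification of Conjecture~C for every pair $(W,W_L)$, searching the linear characters of $C_W(w)$ that restrict to $\varphi_w$ and testing the displayed identity in the character ring of $N_W(W_L)$---would in principle establish the theorem, and your reduction to irreducible $W$ matches the paper. But the paper's actual proof takes a much more economical route: it invokes three previously established general results, namely that Conjecture~C holds whenever $|L|\le 2$, whenever $W_L$ is bulky in $W$ (i.e.\ has a normal complement in $N_W(W_L)$), and whenever every irreducible factor of $W_L$ has type $A$. The first of these disposes of all rank-three groups outright; the first two together with the classification of bulky parabolics in Appendix~\ref{a:bulky} reduce rank four to eight pairs with $W_L$ non-bulky of rank three, and the type-$A$ result eliminates seven of those, leaving only $(H_4, I_2(5)A_1)$ to be checked directly. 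What the paper's approach buys is that almost no computation with the extended characters $\widetilde{\rho_L^r}$ and $\widetilde{\omega_L^r}$ is needed; your approach would require implementing these extensions (the action of $N_W(W_L)$ on the top components of $A(W_L)$ and $\BBC W_L$) for every conjugacy class of proper parabolics, which is machinery \S\ref{ImplementationSection} does not provide, since it only computes $\rho_W^n$ and $\omega_W^n$ for $W$ itself. Two smaller cautions about your plan: the set of linear extensions of $\varphi_w$ to $C_W(w)$, when nonempty, is a torsor under the group of \emph{linear} characters of $C_W(w)/C_{W_L}(w)\cong N_W(W_L)/W_L$ (the characters of its abelianization, since this quotient need not be abelian), and the existence of even one extension is part of what must be verified rather than assumed; and your concern about $\Gamma_L$-orbits of cuspidal classes is somewhat beside the point, since Conjecture~C is stated class by class over representatives of the cuspidal classes of $W_L$, with each summand induced from $C_W(w)$ to $N_W(W_L)$ separately.
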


\begin{proof}
  If $W=W_1\times W_2$ is reducible, then by Remark~\ref{Irreducible} and
  the comments following Theorem~\ref{thm:main}, Conjecture~C holds for $(W,
  W_L)$ if and only if it holds for each of $(W_1,W_{L_1})$ and $(W_2,
  W_{L_2})$, where $L_1,L_2\subseteq S$ are such that $W_L=W_{L_1} \times
  W_{L_2}$.  Thus, we may assume that $W$ is irreducible.

  Recall that $W_L$ is said to be \emph{bulky} in $W$ if it has a normal
  complement in $N_W(W_L)$ (see \cite{bulky}). Conjecture~C is
  proved for any $W$ in \cite{douglasspfeifferroehrle:inductive} if $|L|\leq
  2$ or if $W_L$ is bulky. It follows that the conjecture holds if $W$ has rank
  three, so we may assume that $W$ has rank four and that $W_L$ is non-bulky
  of maximal rank.  The bulky parabolic subgroups of all finite irreducible
  Coxeter groups are listed in Appendix~\ref{a:bulky}. The eight pairs
  $(W,W_L)$ where $W$ has rank four and $W_L$ is non-bulky of maximal rank
  are listed in the following table.
  \[
  \renewcommand{\arraystretch}{1.4}
  \begin{array}{c|l}
    W&W_L\\\hline
    B_4&\widetilde{A}_1A_2\quad A_3 \\ 
    D_4&A_3\\
    F_4&A_1\widetilde{A}_2\quad A_2\widetilde{A}_1\\ 
    H_4&A_3 \quad A_1A_2 \quad I_2(5)A_1\\
  \end{array}
  \]
  In this table $\widetilde{A}_1$ and $\widetilde{A}_2$
  denote subgroups of types $A_1$ and $A_2$ generated by reflections
  orthogonal to short roots.

  With no restriction on the rank of $W$, it is shown
  in~\cite[\S7]{douglasspfeifferroehrle:coxeter} that Conjecture~C
  holds in case all factors of $W_L$ are of type $A$. This only leaves the
  pair $(H_4, I_2(5)A_1)$ to be considered. It is straightforward to verify
  that Conjecture~C holds in this case.
\end{proof}

Assuming the validity of Theorem~\ref{thm:main}, we can now complete the
proof of Theorem~\ref{thm:all}. It is shown in \cite[Theorem
4.7]{douglasspfeifferroehrle:inductive} that the characters $\varphi_w$
satisfying Conjecture~1 can be taken to be the characters $\varphi_w$
satisfying Conjecture~B together with the union of the sets of characters
$\widetilde{\varphi_w}$ satisfying Conjecture~C for the pair $(W,W_L)$ as
$W_L$ ranges over a set of representatives of the conjugacy classes of
proper parabolic subgroups of $W$.  If $W$ has rank at most four, these
characters are provided by Theorems~\ref{thm:main} and~\ref{thm:rel}.
Therefore, Conjecture~1 holds for all finite Coxeter groups of rank at most
four.

As stated above, Theorem~\ref{thm:main} is proved by direct computation. In
the next section we describe the algorithms used in this computation and in
\S\ref{sec:results} we present the results.

\section{Implementation}\label{ImplementationSection}

The proof of Theorem~\ref{thm:main} for each irreducible Coxeter group of
rank three or four consists of computing characters
$\varphi_w$ so that the conclusion of Conjecture~B holds.
In this section we describe how the top component
characters $\rho_W^n$ and $\omega_W^n$ as well as the characters
$\varphi_w$ were calculated.

The calculations were performed using the computer algebra system \GAP\
\cite{gap3}. The \CHEVIE\ package for \GAP\ \cite{chevie} provides
additional functionality for calculating with finite Coxeter groups $W$. In
particular, \CHEVIE\ provides the reflection representation of $W$ through
the matrices giving the action of the Coxeter generators on a vector space.

Recall that an element $w$ is cuspidal if none of its eigenvalues equals
$1$. Thus, to determine whether $w$ is cuspidal, we only need to inspect the
eigenvalues of the matrix representing $w$ in the reflection representation.
The cuspidal classes can also be determined using the \verb!CuspidalClasses!
function supplied by the \ZigZag\ package \cite{zigzag}.

\subsection{Computing the top component character of
  \texorpdfstring{$\mathbb{C}W$}{CW}}

Recall from \S\ref{sec:prelim} that the top component character of $\BBC W$
is the character afforded by the left ideal $\BBC W e_S$, where
$e_S=\sum\limits_{J\subseteq S}n_{SJ}x_J$
and the coefficients $n_{SJ}$ are entries of the matrix
$(n_{KJ})= (m_{KJ})^{-1}$, where $m_{KJ}$ is as in
equation~\eqref{eq:mKJ}.

Let $C_1,C_2,\ldots,C_m$ be the conjugacy classes of $W$ and let $w_i\in
C_i$ for $1\le i\le m$.  By Exercise~16 of \cite[\S9]{curtis_reiner_I} we
have
\[
\rho_W^n(w_i)=|C_W(w_i)|\sum_{w\in C_i}a_w\qquad\text{where}\qquad
e_S=\sum_{w\in W}a_ww.
\] 
Here we have used the fact that $\rho_W^n(w_i)=\rho_W^n(w_i^{-1})$, since
every element of a finite Coxeter group is conjugate to its inverse.  For
$w$ in $W$ set $\mathcal{D}(w) = \{\,s\in S\mid \ell(sw)<\ell(w)\,\}$. Then
$X_J= \{\, w\in W\mid \mathcal D(w) \subseteq S\setminus J\,\}$ and
$a_w=\sum\limits_{\mathcal{D}(w)\subseteq S\setminus J}n_{SJ}$. Therefore,
\begin{align} 
\label{eq:rhon} 
\rho_W^n(w_i)=|C_W(w_i)|\sum_{J\subseteq S} n_{SJ} \left|C_i \cap X_J\right|
\end{align}
and so the character $\rho_W^n$ can be computed using the following procedure:

\smallskip
\begin{enumerate}
\item Compute the coefficients $n_{SJ}$ for all $J\subseteq S$.
\item Compute $|C_i \cap X_J|$ for all $1\le i\le s$ and all $J \subseteq S$.
\item Compute $\rho_W^n(x_i)$ using equation~\eqref{eq:rhon} for all $1\le
  i\le m$.
\end{enumerate}

\smallskip 

\GAP\ and \CHEVIE\ provide functions for computing conjugacy classes and
centralizers in Coxeter groups. 
Using the \ZigZag\ package,
the matrix $(m_{KJ})= (n_{KJ})^{-1}$ can be
calculated with the method call \verb!Call(DescentAlgebra(W), "Mu")!, while
the sets $\mathcal D(w)$ and $X_J$ can be calculated using the
functions \verb!DescentClass!  and \verb!ParabolicTransversal!.
The sizes of the intersections $|C_i \cap X_J|$ for all $1\le i\le
m$ and all $J \subseteq S$ can be calculated
by looping once over the elements of $W$.

\subsection{Computing the top component character of
  \texorpdfstring{$A(W)$}{A(W)}}

We calculate the character $\omega_W^n$ by explicitly calculating the trace
of the representing matrices of $W$ on the top component of $A(W)$. To
perform this calculation, we need (1) a basis of the top component and (2) a
method for writing an arbitrary product $a_{t_1}a_{t_2}\cdots a_{t_p}$ in
$A(W)$ as a linear combination of basis elements in the given basis. We use
the so-called {\em non-broken circuit basis} of $A(W)$ described below.
The method may also be
applied to compute the character of the representation of $W$ on $A^p(W)$
for any $p$.

Recall that the generators $a_t$ of $A(W)$ are indexed by the reflections
$t\in T$. If $H_t$ denotes the $1$-eigenspace of $t$ for $t$ in $T$, then $t
\longleftrightarrow H_t$ describes a canonical bijection between $T$ and the set
of hyperplanes in the arrangement of $W$. In particular, the set $T$ indexes
the hyperplanes in the arrangement of $W$.

The following constructions apply more generally
to the Orlik-Solomon algebra $A(\mathcal{A})$ of any
essential hyperplane arrangement $\mathcal A=\{\, H_t\mid t\in T\,\}$
indexed by a finite set $T$ in an $n$-dimensional vector space. See \S3.1 of
\cite{orlikterao:arrangements} for more information about the algebras
$A(\mathcal{A})$.

\subsubsection{The non-broken circuit basis of the Orlik-Solomon algebra of
  an arrangement}

Let $H$ be a sequence $H_{t_1},H_{t_2},\ldots,H_{t_p}$ of hyperplanes. Then
$H$ is called a {\em circuit} if $H$ is dependent and
$H_{t_1},\ldots,\widehat{H_{t_j}},\ldots,H_{t_p}$ is independent for each
$1\le j\le p$. Note that \textsf{GAP} can easily test whether a tuple of
vectors is linearly independent using linear algebra functions such as
\verb!Rank!. Thus, once a set of linear functionals defining the hyperplanes
$H_t$ has been fixed, it is possible to test whether $H$ is a circuit.

Now fix a total order on $T$ and suppose that the sequence $H$ satisfies
$t_1<t_2<\cdots<t_p$. Then we call $H$
a {\em broken circuit} if $H_{t_1},H_{t_2},\ldots,H_{t_p},H_t$ is
  a circuit for some hyperplane $H_t$ with $t>t_p$.
We call $H$ a {\em non-broken circuit} if no subsequence of $H$ is a broken
  circuit.
Notice that the empty sequence is a non-broken circuit. It is shown in
Theorem~3.43 of \cite{orlikterao:arrangements} that
\begin{equation}\label{NBCBasis}
  \mathcal{B}=\left\{a_{t_1}a_{t_2}\cdots a_{t_p} \mid
    \text{$H_{t_1},H_{t_2},\ldots,H_{t_p}$ is a non-broken circuit}\right\} 
\end{equation}
is a basis of $A(\mathcal A)$. By construction $\mathcal{B}\cap
A^p(\mathcal A)$ is a basis of $A^p(\mathcal A)$ for $1\leq p\leq n$.
Therefore $\mathcal{B}\cap A^n(\mathcal A)$ is a basis of the top component of
$A(\mathcal A)$.

\begin{remark}\label{NBCIndependent}
  If $H$ is dependent, then any minimal dependent subsequence of $H$ is a
  circuit. Removing the last term of such a subsequence results in a broken
  circuit. This also shows that a non-broken circuit is independent.
\end{remark}

Observe that a sequence contains a broken circuit if and only if it contains
a {\em minimal} broken circuit.  Therefore, to identify the non-broken
circuits, it is sufficient to identify the set of minimal broken circuits.
We identify the minimal broken circuits together with the non-broken
circuits in Algorithm~NBC below. In the algorithm and in the following
justification, we denote by $H,H_t$ the sequence of hyperplanes obtained
from the sequence $H$ by appending the hyperplane $H_t$.

\medskip
\noindent{\bf Algorithm NBC} {\sl (Non-broken circuit basis)} This algorithm
takes as input a set $\mathcal A=\{\, H_t\mid t\in T\,\}$ of hyperplanes
indexed by a totally ordered set $T$. It returns a non-broken circuit basis
$\mathcal B$ of the Orlik-Solomon algebra $A(\mathcal A)$ and the set
$\mathcal M$ of minimal broken circuits.

{\sfcode`;=3000 \smallskip 

  {\bf initialize {$\mathcal{B}\gets\emptyset$, $\mathcal{M}\gets\emptyset$,
      $Q\gets\{()\}$}}

{\bf while $Q\ne\emptyset$ do}

\quad {remove the first element $H = H_{t_1}, H_{t_2}, \ldots, H_{t_p}$ of $Q$}

\quad {\bf if {\rm $H,H_t$ is independent for all $t>t_p$} then}

\qquad{append $H$ to $\mathcal{B}$}

\qquad{append $H,H_t$ to $Q$ for every $t$ such that $t>t_p$}

\quad{\bf else}

\qquad{append $H$ to $\mathcal{M}$ if $H$ does not contain any sequence in
  $\mathcal{M}$ as a subsequence}

\quad{\bf endif}

{\bf done}

{\bf return {$\mathcal{B},\mathcal{M}$}}.\bigskip}

Algorithm~NBC is justified as follows. Throughout the procedure we maintain
a queue $Q$ of sequences that remain to be considered. Initially $Q$
contains only the empty sequence. The procedure terminates when $Q$ is
empty. At every step $Q$ has the following properties.
\begin{enumerate}
\item\label{Increasing} Each sequence $H_{t_1}, H_{t_2}, \ldots, H_{t_p}\in
  Q$ satisfies $t_1<t_2<\cdots<t_p$.
\item\label{Independent} Each sequence $H\in Q$ is independent.
\item\label{ShorterCircuits} If the first sequence in $Q$ has length $p$,
  then all minimal broken circuits and all non-broken circuits of length
  less than $p$ have been discovered.
\end{enumerate}

In Algorithm~NBC suppose that the sequence $H$ removed from $Q$ is such that
$H, H_t$ is dependent for some $t>t_p$. Then $H, H_t$ contains a circuit as
a subsequence by Remark~\ref{NBCIndependent}.  At this point, all the minimal
broken circuits that are shorter than $H$ have been discovered by
(\ref{ShorterCircuits}), so if none of these is a subsequence of $H$, then
the entire sequence $H, H_t$ must be a circuit so that $H$ is a minimal
broken circuit.

On the other hand, suppose that $H, H_t$ is independent for all $t>t_p$.
Then each of the sequences $H, H_t$ for $t>t_p$ is appended to $Q$.  Note
that $H,H_t$ satisfies conditions (\ref{Increasing}) and (\ref{Independent})
above because $t>t_p$, while (\ref{ShorterCircuits}) holds because sequences
are always added to the end of $Q$ and hence appear after all the sequences
of shorter length.  At this point $H$ is added to $\mathcal{B}$.  This is
justified by the observation that if on the contrary, $H$ contains a broken
circuit $H'=H_{t_{j_1}},H_{t_{j_2}},\ldots,H_{t_{j_q}}$ as a subsequence,
then $H_{t_1}, H_{t_2}, \ldots, H_{t_{j_q}}$ (and hence $H$) would never
have been added to $Q$, since $H_{t_1}, H_{t_2}, \ldots, H_{t_{j_q}},H_t$ is
dependent for some $t>t_{i_q}$, namely the $t$ for which $H_t$ completes
$H'$ to a circuit.

It remains to show that if $H = H_{t_1}, H_{t_2}, \ldots, H_{t_p}$ is any
non-broken circuit (or respectively, any minimal broken circuit), then $H$
will be added to $\mathcal{B}$ (or respectively, to $\mathcal{M}$) at some
stage.  For this, it suffices to show that $H$ will be added to $Q$ at some
point, because as shown above, $H$ is identified as a non-broken circuit (or
respectively, a minimal broken circuit) when it is removed from
$Q$.  Observe that for all $q<p$ and all $t>t_q$ the sequence $H_{t_1},\ldots,
H_{t_q}, H_t$ is independent, since otherwise $H$ would contain a broken
circuit by Remark~\ref{NBCIndependent}.  Therefore, each of the sequences
$H_{t_1},\ldots, H_{t_q},H_t$ is added to $Q$.  In particular each sequence
$H_{t_1},\ldots, H_{t_q},H_{t_{q+1}}$ with $q<p$ is added to $Q$, so $H$
itself is added to $Q$.

\subsubsection{Expressing a product as a linear combination of basis
  elements} 

We now consider how to express a product $a=a_{t_1}a_{t_2}\cdots a_{t_p}$ of
$A(\mathcal A)$ in terms of the basis $\mathcal{B}$ returned by
Algorithm~NBC. If $a$ is not an element of the basis $\mathcal{B}$ then
$H=H_{t_1},\ldots,H_{t_p}$ must contain a broken circuit as a subsequence.
More specifically, it must contain a minimal non-broken circuit
$H'\in\mathcal{M}$, which we can identify directly by comparing all the
elements of $\mathcal{M}$ to $H$. Since $H'$ is a broken circuit, we can
find a hyperplane $H_t$ such that $H',H_t$ is a circuit.  The possibility
arises that $H_t$ is already an element of $H$. In this situation $a=0$
since $H$ contains the dependent subsequence $H',H_t$. Otherwise we use
(\ref{ARelation}) to express $a$ in terms of elements corresponding with
lexicographically larger sequences of the same length. Namely, multiplying
both sides of
\[
(-1)^{q+1} a_{t_{j_1}}\dotsm a_{t_{j_q}} =\sum_{k=1}^q(-1)^k a_{t_{j_1}}
\dotsm \widehat{a_{t_{j_k}}} \dotsm a_{t_{j_q}}a_t
\] 
by the remaining factors of $a$ results in $a$ on the left side and elements
of $A(\mathcal A)$ on the right side which can be expressed in terms of
$\mathcal{B}$ by induction.

It is straightforward to implement the recursive procedure just described
using standard functions in \GAP.

\subsection{Choosing linear characters of the centralizers}

Let $\{c_1,c_2,\ldots,c_s\}$ be a list of representatives of the cuspidal
classes of $W$. To find linear characters $\varphi_{c_i}$ of $C_W(c_i)$ that
satisfy the first equality in Conjecture~B, we proceed in three steps.
\begin{enumerate}
\item Find the linear characters $\varphi_{c_i, j}$ of the centralizer
  $C_W(c_i)$ for $1\leq i\leq s$.
\item Find the decomposition of each induced character $\varphi_{c_i,j}^W$
  as a sum of irreducible characters of $W$.
\item Use the exact packing algorithm described below to find a set $\{j_1,
  \dots, j_s\}$ so that $\sum_{i=1}^s \varphi_{c_i, j_i} = \rho_n^W$.
\end{enumerate}

\GAP\ and \CHEVIE\ provide tools to compute linear characters and to
decompose induced representations. To complete the third step we exploit the
fact that the irreducible characters of $W$ form a basis of its character
ring to convert the problem into an {\em exact packing problem}.
Namely, the character $\chi$ of a finite-dimensional
representation of $W$ can be
uniquely expressed as a sum $\chi= \sum_{j=1}^m n_j \chi_j$, where $\chi_1$,
\dots, $\chi_m$ are the irreducible characters of $W$ and $n_j$ is the
multiplicity of $\chi_j$ as a constituent of $\chi$, given by the intertwining
number $\langle \chi, \chi_j\rangle_W$. Thus, the top component character
$\rho_W^n$ of $\BBC W$ is given by an $m$-tuple of multiplicities, say
$g=(g_1, \dots, g_m)\in\mathbb{N}^m$, 
where $\mathbb{N}$ denotes the set of non-negative integers.
Similarly, for each $i$
the induced character $\Ind_{C(c_i)}^W \varphi_{c_i,j}$ is determined
by an $m$-tuple of multiplicities.
Let $L_i\subseteq\mathbb{N}^m$ be the set of these vectors.
Therefore, to find
characters $\varphi_{c_i,j_i}$ such that $\rho_W^n= \sum_{i=1}^s
\varphi_{c_i,j_i}^W$ we need  to enumerate the elements of the set
\begin{align*}
  E(g; L_1, \dots, L_s) =\left\{(\ell_1, \dots, \ell_s)
\in L_1 \times \dots \times L_s \mid \sum_{i=1}^s \ell_i = g\right\}.
\end{align*}
This exact packing problem is solved for $s > 0$
recursively by the formula
\begin{align*}
  E(g; L_1, \dots, L_s) = \coprod_{\substack{\ell_1\in L_1\\\ell_1\leq g}}
\left\{(\ell_1,\ell_2, \dots, \ell_s)
\mid (\ell_2, \dots, \ell_s) \in E(g - \ell_1; L_2, \dots, L_s)\right\}.
\end{align*}
Here we write $x\le y$ for $x,y\in\mathbb{N}^m$
if $x_i\le y_i$ for all $1\le i\le s$
where $x=(x_1,x_2,\ldots,x_m)$ and $y=(y_1,y_2,\ldots,y_m)$.
When $s = 0$, note that $E(g;)$ is the empty set if $g \neq 0$,
while $E(g;)$ consists of the empty sequence if $g = 0$.
The set $E(g; L_1, \dots, L_s)$ corresponds with a set of paths
in a labeled, rooted tree with labeled edges as follows.
The nodes and edges of this tree are labeled by vectors
in $\mathbb N^m$. The root of the tree is the goal vector $g$. If $i>0$
and $v$ is the label of a node at distance $i-1$ from the root, then the
children of this node 
are labeled by the vectors $v-\ell_i$ such that $\ell_i\in L_i$
and $v-\ell_i\geq 0$. The directed edge from $v$ to
$v-\ell_i$ is labeled by $\ell_i$. Then the leaves
at distance $s$ from the root and labeled by the zero vector are in
bijection with $E(g; L_1, \dots, L_s)$.
The solution $(\ell_1, \dots, \ell_s)$ corresponding to such a leaf is 
simply the sequence of labels on the path from the root to that leaf. 

For each permutation $\sigma$ of $\{1, \dots, s\}$ there is a natural bijection
between the sets $E(g; L_1, \dots, L_s)$ and $E(g; L_{\sigma(1)}, \dots, L_{\sigma(s)})$.  However, the trees corresponding to these sets can differ in size,
and thus be more or less expensive to compute.
The \ZigZag\ package contains an
efficient implementation of the construction just described in the form of the
function \texttt{ExactPackings}.

As an example, in the group $W(F_4)$ there are $s=9$ cuspidal conjugacy classes
and the lists $L_1$, \dots, $L_s$ contain
 $4$, $8$, $3$, $8$, $5$, $5$, $6$, $12$, $4$
elements.
The \ZigZag\ implementation finds the unique solution
by constructing a tree with $24$ vertices,
a considerable improvement over testing each of the approximately
$5,500,000$ elements of $L_1\times\cdots\times L_s$.

\section{Proof of Theorem~\ref{thm:main}} 
\label{sec:results}

In this section we present the results of our computations for the Coxeter
groups of types $B_3$, $H_3$, $B_4$, $D_4$, $F_4$, and $H_4$, thus verifying
Theorem~\ref{thm:main} for irreducible Coxeter groups of rank three and
four. In particular, for each element $w$ in a set of representatives of the
cuspidal conjugacy classes we find a linear character $\varphi_w$ of
$C_W(w)$ so that the conclusion of Conjecture~B holds.

A general method for constructing linear characters of centralizers is as
follows.  For $w$ in $W$ and $\zeta$ an eigenvalue of $w$ on $V$, let
$E(\zeta)$ denote the $\zeta$-eigenspace of $w$. Then $C_W(w)$ acts on
$E(\zeta)$ and $y\mapsto \det (y|_{E(\zeta)})^p$ defines a linear character
of $C_W(w)$ for each natural number $p$. Denote this character of $C_W(w)$
by $(\det|_{E(\zeta)})^p$.

It turns out that many of the characters $\varphi_w$ satisfying the
conclusion of Conjecture~B arise from the construction just
described. For example, if $w$ is a regular element in $W$ and $E(\zeta)$ is a
regular eigenspace of $w$, that is, such that $E(\zeta) \not \subseteq H_t$
for all $t$ in $T$, then with one exception, the character $\varphi_w$ is
equal $(\det|_{E(\zeta)})^p$ for some $p>0$. The exception is the class
labeled by the partition $22$ in type $B_4$ (see \S \ref{B4DataSection}).

We say that a conjugacy class in $W$ is \emph{regular} if it contains a
regular element. In all the groups we consider below, the longest element
$w_0$ is central and the character $\varphi_{w_0}$ is the sign character of
$W$. Thus $w_0$ is regular and $\varphi_{w_0}= \det|_{E(-1)}$. The Coxeter
class is well-known to be a regular class. If $w$ is a Coxeter element, then
it acts on its eigenspace $E(\zeta)$ as a cyclic group of order $|w|$, where
$\zeta$ is a primitive $|w|^{\text{th}}$ root of unity. It turns out to
always be the case that $\varphi_{w}= (\det|_{E(\zeta)})^p$, but it can
happen that $p\ne1$.

The rest of this section is devoted to giving the following information for
each finite Coxeter group of rank $3$ or $4$.
\begin{enumerate}
\item For each $w$ in a set of representatives of the cuspidal conjugacy
  classes of $W$,
  \begin{itemize}
  \item a generating set $G_w$ for $C_W(w)$, 
  \item a character $\varphi_w$ of $C_W(w)$, and
  \item the character values $\varphi_w(g)$, for $g$ in $G_w$.
  \end{itemize}
\item A table containing the values of the characters $\rho_W^n$ and
  $\omega_W^n$, as well as $\Ind_{C_W(w)}^W \varphi_w$ for each
  representative $w$.  In all cases we see that
  \[
  \rho_W^n=\sum_{w\in\mathcal{R}} \Ind_{C_W(w)}^W \varphi_w =\epsilon
  \omega_W^n
  \]
  as asserted in Theorem~\ref{thm:main}.  In these tables, the rows are
  indexed by the characters $\Ind_{C_W(w)}^W \varphi_w$ (denoted simply by
  $\varphi_w$), $\rho_W^n$, and $\omega_W^n$, and the columns are indexed by
  the conjugacy classes of $W$.
\item When $w$ is regular we identify the complex reflection group given by
  the action of $C_W(w)$ on a regular eigenspace $E(\zeta)$ and when
  possible, we compare the character $\varphi_w$ with $\det|_{E(\zeta)}$.
\end{enumerate}

The following notation is in force. The cyclic group of size $n$ is denoted
by $\cyclic{n}$ and the symmetric group on $n$ letters is denoted by
$\symmetric{n}$. For $n\ge 1$ we denote the primitive complex
$n^{\text{th}}$ root of unity $e^{2\pi i/n}$ by $\zeta_n$.  As in the proof
of Theorem~\ref{thm:rel}, the labels $\widetilde{A}_1$ and $\widetilde{A}_2$
denote subgroups of types $A_1$ and $A_2$ generated by reflections
orthogonal to short roots.  The same convention applies to $\widetilde{D}_4$
in $W(F_4)$. We denote partitions as strings of numbers, without commas,
written in non-decreasing order. Finally, when $w$ lies in more than one
Coxeter group, for example $w\in W(B_3)\subseteq W(B_4)$, and we wish to
compare the characters of centralizers of $w$ in different groups, we
decorate the symbol $\varphi_w$ with a superscript giving the type of the
group in which $w$ lies.

\subsection{\texorpdfstring{$W$}{W} of type \texorpdfstring{$B$}{B}}
\label{BTheorySection}
Assume now that $V$ is the reflection representation of $W=W(B_n)$ with
$n\geq 2$ and let $\{\, v_1, \dots, v_n\,\}$ be a basis of $V$.  We view $W$
as acting on $V$ by signed permutations of $\{\,v_1,\ldots,v_n\,\}$.
Namely, the Coxeter generators $s_1,s_2\ldots,s_n$ are given by
\[
s_1(v_k)= \begin{cases} -v_1,&k=1\\\phantom{-}v_k,&k\ne1 \end{cases}\quad
\text{and} \quad s_i(v_k)= \begin{cases}\phantom{_{-1}}v_{i},&k=i-1\\
  v_{i-1},& k=i \\
  \phantom{_{-1}}v_k,&k\ne i-1,i \end{cases} \quad \text{for $i>1$.}
\] 
The Dynkin diagram of $W(B_n)$ is $\begin{xy}<.75cm,0cm>: (0,0)="1";
  (1,0)="2"**\dir{=}; ?*{<}; "2";(2,0)="3"**\dir{-}; (2.5,0)**\dir{-};
  (3,0)="4"; (3.5,0); (4,0)="5"**\dir{-}; (5,0)="6"**\dir{-};
  "1"*{\bullet}*+!U{_1}; "2"*{\bullet}*+!U{_2}; "3"*{\bullet}*+!U{_3};
  "4"*{\cdots}; "5"*{\bullet}*+!U{_{n{-}1}}; "6"*{\bullet}*+!U{_n};
\end{xy}$.  

For $1\leq i<j\leq n$ we define elements $t_i$ and $s_{i,j}$ by
\[
t_i(v_k)= \begin{cases} -v_i,&k=i\\\phantom{-} v_k,&k\ne i \end{cases}\quad
\text{and} \quad s_{i,j}(v_k)= \begin{cases} v_{j},&k=i\\ v_{i},& k=j \\
  v_k,&k\ne i,j. \end{cases}
\]

It is well-known that the conjugacy classes in $W(B_n)$ are indexed by
double partitions $\mu.\lambda$ of $n$. Namely, if
$\mu=\mu_1\mu_2\cdots\mu_q$ and $\lambda=\lambda_1\lambda_2\cdots\lambda_p$
are such that $\sum_{i=1}^q \mu_i + \sum_{j=1}^p \lambda_j =n$, then
elements of the conjugacy class indexed by $\mu.\lambda$ have $q$
``positive'' cycles of lengths $\mu_1,\ldots,\mu_q$ and $p$ ``negative''
cycles of lengths $\lambda_1,\ldots,\lambda_q$. If $\mu$ or $\lambda$ is the
empty partition, then it is omitted from the notation. With this labeling,
the cuspidal conjugacy classes are indexed by the double partitions of the
form $.\lambda$. Since we only consider cuspidal classes, we write $\lambda$
instead of $.\lambda$. See \cite[\S3.4] {geckpfeiffer:characters} for more
details.

Fix a partition $\lambda=\lambda_1\lambda_2\cdots\lambda_p$ of $n$. Set
$\tau_1=0$ and for $i>1$ define $\tau_i=\lambda_1+ \dots+
\lambda_{i-1}$. Then $\tau_{i}+ \lambda_i= \tau_{i+1}$ and
$\tau_{p+1}=n$. For $1\leq i\leq p$ define
\[
c_i= t_{\tau_{i}+1} s_{\tau_{i}+2} s_{\tau_{i}+3} \cdots s_{\tau_{i+1}}
\]
in $W$. Then $c_i$ has order $2\lambda_i$ and acts on the set
$\{v_{\tau_i+1}, \dots, v_{\tau_{i+1}} \}$ as a ``negative
$\lambda_i$-cycle.'' Define
\[
w_\lambda= c_1c_2\dotsm c_p.
\]
Then $w_\lambda$ is a representative of the cuspidal conjugacy class labeled
by $\lambda$. For each $i$ such that $\lambda_i= \lambda_{i+1}$ define
\[
x_i= s_{\tau_i+1, \tau_{i+1}+1} s_{\tau_i+2, \tau_{i+1}+2} \dotsm
s_{\tau_{i+1}, \tau_{i+2}}.
\]
It is straightforward to check that $x_i$ centralizes $w_\lambda$ and that
$C_W(w_\lambda)$ is generated by
\[
\{\, c_i\mid 1\leq i\leq p\,\} \cup \{\, x_i\mid 1\leq i\leq p,\, \lambda_i=
\lambda_{i+1}\,\}.
\]
If $\lambda$ has $m_i$ parts equal $i$, then $C_W(w_\lambda ) =\prod_{m_i>0}
\cyclic{2i} \wr \symmetric{m_i}$.

It follows from the definition of $w_\lambda$ that if $\lambda$ is the
partition with all parts equal $1$, then $w_\lambda$ is the longest element
$w_0$ of $W$.  If $\lambda$ is the partition with the single part $n$, then
$w_\lambda$ is a Coxeter element.  To simplify the notation, the character
$\varphi_{w_\lambda}$ of $C_W(w_\lambda)$ is denoted simply by
$\varphi_\lambda$.

\subsubsection{$W=W(B_3)$}\label{ss:B3}
The cuspidal conjugacy classes are labeled by the partitions $111$, $12$,
and $3$. Characters $\varphi_\lambda$ satisfying the conclusion of
Conjecture~B are given in the following table.  For each partition
$\lambda$, the table lists the isomorphism type of $C_W(w_\lambda)$ in the
second row, the generators of $C_W(w_\lambda)$, using the notation from
\S\ref{BTheorySection}, in the third row, and the values of
$\varphi_\lambda$ on the generators in the fourth row. When
$\varphi_\lambda$ is the sign character, we denote it by $\epsilon$ and omit
the character values.
\[
\renewcommand{\arraystretch}{1.2}
\begin{array}{c||ccc|cccc|cc}
  \lambda& \multicolumn{3} {c|} {111}& \multicolumn {4} {c|} {12}&
  \multicolumn {2} {c} {3}\\ \hline
  C_W(w_\lambda)&&W&&&\multicolumn{2}{c}{\cyclic 2 \times \cyclic 4}&&&
  \cyclic 6 \\ \hline 
  \text{Generators}&& S &&&c_1& c_2 &&& w_{3} \\ \hline
  \varphi_\lambda &&\epsilon &&& -1&-1&&&\zeta_6  
\end{array}
\]

The classes $111$ and $3$ are regular and $\varphi_{3}= \det|_{E(\zeta_6)}$.

The values of the characters $\varphi_\lambda^W$ together with $\rho_W^3$
and $\omega_W^3$ are given in Table~\ref{fig:B3}.

\begin{table}[h!tb]
  \[
  \renewcommand{\arraystretch}{1.2}
  \begin{array}{r|rrrrrrrrrr}
    &111.&11.1&1.11&.111&12.&1.2&2.1&.12&3.&.3\\\hline
    \epsilon=\varphi_{111}&1&-1&1&-1&-1&1&1&-1&1&-1\\ 
    \varphi_{12}&6&-2&2&-6&\cdot&-2&\cdot&2&\cdot&\cdot\\
    \varphi_3&8&\cdot&\cdot&-8&\cdot&\cdot&\cdot&\cdot&-1&1\\ \hline
    \rho_W^3& 15&-3&3&-15&-1&-1&1&1&\cdot&\cdot\\ 
    \omega_W^3&15&3&3&15&1&-1&1&-1&\cdot&\cdot
  \end{array}
  \]
  \caption{The characters $\varphi_\lambda^W$, $\rho_W^3$, and
    $\omega_W^3$ for $W(B_3)$}\label{fig:B3}
\end{table}

\subsubsection{$W=W(B_4)$}\label{B4DataSection}
The cuspidal conjugacy classes are labeled by the partitions of $4$. With
the same conventions as for $W(B_3)$, characters $\varphi_\lambda$
satisfying the conclusion of Conjecture~B are given in the following table.
\[
\renewcommand{\arraystretch}{1.3}
  \begin{array}{c||c|ccc|cc|cc|c}
    \lambda& 1111& \multicolumn {3} {c|} {112}&
    \multicolumn{2}{c|}{22} &\multicolumn{2} {c|} {13} &4\\\hline  
    C_W(w_\lambda)&W&\multicolumn{3}{c|}{(\cyclic{2}\wr \symmetric{2})
      \times \cyclic{4}}&\multicolumn{2}{c|} {\cyclic 4 \wr \symmetric
      2}& \multicolumn{2} {c|} {\cyclic 2\times \cyclic 6} &\cyclic 8\\ \hline 
    \text{Generators}&S&c_1&x_1&c_3&c_1&x_1&c_1&c_2&w_4 \\ \hline   
    \varphi_\lambda &\epsilon& -1&-1&-1& -1 &-1 & -1
    &\zeta_6 &-1 
  \end{array} 
\]

The regular classes are $1111$, $22$, and $4$. In contrast with the Coxeter
class in type $B_3$ where $\varphi_{3}= \det|_{E(\zeta_6)}$, in this case we
have $\varphi_{4}= (\det|_{E(\zeta_8)})^4$. It is easy to see that if
$\zeta$ is any primitive fourth root of unity, then $C_W(w_{22})$ acts on
the two-dimensional, regular eigenspace $E(\zeta)$ as the complex reflection
group $G(4,1,2) \cong \cyclic 4 \wr \symmetric 2$. The eigenvalues of $c_1$
and $x_1$ on $E(\zeta)$ are $\{1,\zeta\}$ and $\{1,-1\}$, respectively. Thus
$\det |_{E(\zeta)}$ takes the values $\zeta$ and $-1$ at $c_1$ and $x_1$
while $\varphi_{22}$ takes the value $-1$ at both $c_1$ and $x_1$.  It
follows that $\varphi_{22}$ is not equal to any power of $\det|_{E(\zeta)}$.

The values of the characters $\varphi_\lambda^W$ together with $\rho_W^4$
and $\omega_W^4$ are given in Table~\ref{fig:B4}.

\begin{table}[htbp]
  \small
  \begin{multline*}
    \renewcommand{\arraystretch}{1.2}
    \begin{array}{r|rrrrrrrrrr}
      &1111.&111.1&11.11&1.111&.1111&112.&11.2&12.1&
      1.12&2.11\\\hline
      \epsilon=\varphi_{1111}& 1&-1&1&-1&1&-1&1&1&-1&-1\\
      \varphi_{112}& 12&-6&4&-6&12&-2&\cdot&\cdot&2&-2\\
      \varphi_{22}& 12&\cdot&4&\cdot&12&\cdot&-4&\cdot&\cdot&\cdot\\
      \varphi_{13}& 32&-8&\cdot&-8&32&\cdot&\cdot&\cdot&\cdot&\cdot\\
      \varphi_4&
      48&\cdot&\cdot&\cdot&48&\cdot&\cdot&\cdot&\cdot&\cdot\\ \hline
      \rho_W^4&105&-15&9&-15&105&-3&-3&1&1&-3\\
      \omega_W^4&105&15&9&15&105&3&-3&1&-1&3
    \end{array}\\ \\
    \renewcommand{\arraystretch}{1.2}
    \begin{array}{rrrrrrrrrr|l}
      .112&22.&2.2&.22&13.&1.3&3.1&.13& 4.&.4&\\\hline
      1&1&-1&1&1&-1&-1&1&-1&1& \varphi_{1111}=\epsilon\\
      \cdot&\cdot&2&-4&\cdot&\cdot&\cdot&\cdot&\cdot&\cdot&
      \varphi_{112} \\ 
      -4&-4&\cdot&\cdot&\cdot&\cdot&\cdot&\cdot&\cdot&2& \varphi_{22}\\
      \cdot&\cdot&\cdot&\cdot&-1&1&1&-1&\cdot&\cdot& \varphi_{13}\\
      \cdot&\cdot&\cdot&8&\cdot&\cdot&\cdot&\cdot&\cdot&-4&
      \varphi_{4}\\ \hline 
      -3&-3&1&5&\cdot&\cdot&\cdot&\cdot&-1&-1& \rho_W^4\\
      -3&-3&-1&5&\cdot&\cdot&\cdot&\cdot&1&-1& \omega_W^4
    \end{array}
  \end{multline*}
  \caption{The characters $\varphi_\lambda^W$, $\rho_W^4$, and
    $\omega_W^4$ for $W(B_4)$}\label{fig:B4}
\end{table}

\subsection{\texorpdfstring{$W$}{W} of type
  \texorpdfstring{$D$}{D}} \label{DTheorySection} 

As above, $V$ has basis $\{\, v_1, \dots, v_n\,\}$. We consider elements of
$W=W(D_n)$ as acting as signed permutations of $\{\, v_1, \dots, v_n\,\}$
with even number of sign changes. Then $W(D_n)$ is a normal subgroup of
$W(B_n)$ of index $2$. The Coxeter generators of $W(D_n)$ are
$s_1',s_2,\dots,s_n$ where $s_2,\dots, s_n$ are the last $n-1$ Coxeter
generators of $W(B_n)$ defined in \S\ref{BTheorySection} and $s_1'$ is given
by
\[
s_1'(v_k)= \begin{cases}
  -v_2,&k=1\\
  -v_1, &k=2\\
  \phantom{-}v_k,&k\ne1,2.
\end{cases}
\]
Then $s_1'=s_1s_2s_1$, where $s_1$ is the first Coxeter generator of
$W(B_n)$. The Dynkin diagram of $W=W(D_n)$ is
\[
\begin{xy}<.75cm,0cm>: (0,-1)="1"; (0,1)="2"; (1,0)="3"**\dir{-};
  "1";"3"**\dir{-}; (2,0)="4"**\dir{-}; (2.5,0)**\dir{-}; (3,0)="5";
  (3.5,0);(4,0)="6"**\dir{-}; (5,0)="7"**\dir{-}; "1"*{\bullet}*+!U{_2};
  "2"*{\bullet}*+!U{_{1'}}; "3"*{\bullet}*+!U{_3}; "4"*{\bullet}*+!U{_4};
  "5"*{\cdots}; "6"*{\bullet}*+!U{_{n{-}1}}; "7"*{\bullet}*+!U{_n};
  \end{xy} .
\]

Because $W(D_n)$ is a normal subgroup of $W(B_n)$, it is a union of
conjugacy classes of $W(B_n)$. The conjugacy class of $W(B_n)$ labeled by
the double partition $\mu.\lambda$ of $n$ lies in $W(D_n)$ if and only if
$\lambda$ has an even number of parts. Moreover, an element in $W(D_n)$ is
cuspidal if and only if it is cuspidal in $W(B_n)$ and cuspidal classes do
not fuse, so the cuspidal conjugacy classes of $W(D_n)$ are labeled by
partitions of $n$ with an even number of parts. For such a partition
$\lambda$ we take $w_\lambda$ to be the representative of the conjugacy
class of $W(B_n)$ chosen in \S\ref{BTheorySection}. Then the centralizer in
$W(D_n)$ of $w_\lambda$ is the intersection of $W(D_n)$ and the centralizer
of $w_\lambda$ in $W(B_n)$. See \cite[\S3.4]{geckpfeiffer:characters} for
more details.

\subsubsection{$W=W(D_4)$}\label{D4DataSection}
The cuspidal conjugacy classes are labeled by the partitions $1111$, $22$,
and $13$. All three classes are regular. Each of these conjugacy classes is
also a conjugacy class in the larger group $W(B_4)$, and as remarked above,
$C_W(w_\lambda)= W\cap C_{W(B_4)}(w_\lambda)$.  One might conjecture that
the character of $\varphi_{\lambda}^{D_4}$ of $C_{W(D_4)}(w_\lambda)$ is the
restriction of the character of $\varphi_{\lambda}^{B_4}$ of
$C_{W(B_4)}(w_\lambda)$. This turns out to be the case for the class of
$w_0$ labeled by $1111$ and for the Coxeter class labeled by $13$, but not
for the class labeled by $22$.

Let $w_{22}= s_1's_3 s_1' s_2 s_3 s_4$. Using the notation introduced in
\S\ref{BTheorySection} we have $w_{22}= c_1 c_2$. The centralizer of
$w_{22}$ in $W(D_4)$ contains the generator $x_1$ of the centralizer of
$w_{22}$ in $W(B_4)$, but not the generator $c_1$. Rather,
$C_{W(D_4)}(w_{22})$ is generated by $w_{22}$, $x_1$, and the involution
$s_1's_2$. The character $\varphi_{22}$ maps each of these generators to
$-1$. Notice that $\varphi_{22}^{B_4} (w_{22}) =\left(\varphi_{22}^{D_4}
  (w_{22})\right)^2$.

With the same conventions as for $W(B_3)$ , characters $\varphi_\lambda$
satisfying the conclusion of Conjecture~B are given in the following table.
\[
\renewcommand{\arraystretch}{1.2}
\begin{array}{c||ccc|ccccc|ccc}
  \lambda& \multicolumn{3} {c|} {1111}& \multicolumn {5} {c|} {22}&
  \multicolumn {3} {c} {13}\\ \hline
  C_W(w_\lambda)& \multicolumn{3} {c|} {W}& \multicolumn {5} {c|}
  {(\cyclic 4\times \cyclic 2)\rtimes \symmetric 2}&
  \multicolumn {3} {c} {\cyclic 6}\\ \hline
  \text{Generators}&& S &&&
  w_{22}& s_1's_2 &x_1  &&& w_{13} & \\ \hline
  \varphi_\lambda &&\epsilon&&& -1&-1 &-1&&&\zeta_3 & 
\end{array}
\]

For the Coxeter class $13$ we have $\varphi_{13}= (\det|_{ E(\zeta_6)}
)^2$. Let $\zeta$ be a primitive fourth root of unity. Using \cite[Theorem
4.2]{springer} it is easy to compute that if $\zeta$ is any primitive fourth
root of unity, then $C_W(w_{22})$ acts on the two-dimensional, regular
eigenspace $E(\zeta)$ as the complex reflection group $G(4,2,2) \cong
(\cyclic 4\times \cyclic 2)\rtimes \symmetric 2$. The eigenvalues of
$w_{22}$, $s_1's_2$, and $x_1$ acting on $E(\zeta)$ are $\{\zeta,\zeta\}$,
$\{1,-1\}$ and $\{1,-1\}$, respectively. Thus
$\varphi_{22}=\det|_{E(\zeta)}$.

The values of characters $\varphi_\lambda^W$ together with $\rho_W^4$ and
$\omega_W^4$ are shown in Table~\ref{fig:D4}.

\begin{table}[htbp]
  \[
  \renewcommand{\arraystretch}{1.2}\arraycolsep4pt
  \begin{array}{r|rrrrrrrrrrrrr}
    &1111.&11.11&.1111&112.&1.12&2.11&22.+&22.-&.22&13.&.13&4.+&4.-\\
    \hline 
    \epsilon=\varphi_{1111}&1&1&1&-1&-1&-1&1&1&1&1&1&-1&-1\\
    \varphi_{22}&12&-4&12&\cdot&\cdot&\cdot&-4&-4&4&\cdot&\cdot&\cdot&\cdot\\
    \varphi_{13}&32&\cdot&32&\cdot &\cdot&\cdot&\cdot &\cdot&\cdot&-1&-1
    &\cdot&\cdot\\ 
    \hline \rho_W^4&45&-3&45&-1&-1&-1&-3&-3&5&\cdot&\cdot&-1&-1\\  
    \omega_W^4&45&-3&45&1&1&1&-3&-3&5&\cdot&\cdot&1&1
  \end{array}
  \]
  \caption{The characters $\varphi_\lambda^W$, $\rho_W^4$, and  $\omega_W^4$
    for $W(D_4)$}\label{fig:D4} 
\end{table}

\subsection{\texorpdfstring{$W=W(F_4)$}{W=W(F4)}}
The Dynkin diagram of $W$ is $\vcenter{\begin{xy}<.75cm,0cm>: (0,0)="1";
    (1,0)="2"**\dir{-}; (2,0)="3"**\dir{=}; ?*{>}; "3";(3,0)="4"**\dir{-};
    "1"*{\bullet}*+!U{_1}; "2"*{\bullet}*+!U{_2}; "3"*{\bullet}*+!U{_3};
    "4"*{\bullet}*+!U{_4}; \end{xy}}$.  We label the conjugacy classes of
$W$ using Carter's labeling \cite{carter:conjugacy}. This is also the
labeling used by the \CHEVIE\ package.  There are nine cuspidal conjugacy
classes. Their Carter labels are
\[
4A_1,\quad D_4,\quad D_4(a_1),\quad C_3A_1,\quad A_2\widetilde{A}_2, \quad
F_4(a_1),\quad F_4,\quad A_3\widetilde{A}_1,\quad\text{and}\quad B_4.
\]
For each label $d$ above we denote the representative of the class labeled
$d$ by $w_d$ and the character of $C_W(w_d)$ satisfying
Theorem~\ref{thm:main} by $\varphi_d$.  The classes labeled by $4A_1$,
$F_4$, and $B_4$ are self-centralizing and we consider them first.

\medskip
\begin{description}
\item[$4A_1$] The class labeled by $4A_1$ is $\{w_0\}$. We take
  $\varphi_{4A_1} =\epsilon$.

\item[$F_4$] This is the Coxeter class. Coxeter elements have order
  $12$. We take $w_{F_4}$ to be any Coxeter element and define
  $\varphi_{F_4}$ by $\varphi_{F_4}(w_{F_4})=\zeta_3$. Clearly,
  $\varphi_{F_4}= (\det|_{E(\zeta_{12})})^4$.

\item[$B_4$] This class is regular and contains the Coxeter elements in the
  maximal rank subgroups of $W$ of type $B_4$. Then $C_W(w_{B_4})=
  C_{W(B_4)}(w_{B_4})$ is cyclic of order $8$, where $w_{B_4}$ denotes the
  element $w_4$ from \S\ref{B4DataSection}. Define $\varphi_{B_4}$ by
  $\varphi_{B_4} (w_{B_4})=\zeta_4$. We see from \S\ref{B4DataSection} that
  $\varphi_{B_4} ^{B_4}= \left(\varphi_{B_4} ^{F_4}\right)^2$. In addition,
  $\varphi_{B_4}=(\det|_{E(\zeta_8)})^2$.

\item[$D_4$] This class contains the Coxeter elements in the maximal rank
  subgroups of $W$ of type $D_4$ and $B_3\widetilde{A}_1$ (see
  \cite{douglasspfeifferroehrle:reflection}). It also contains the class
  labeled by the partition $13$ in the maximal rank subgroups of $W$ of
  types~$B_4$ and $D_4$.  In $W(D_4)$ the centralizer of $w_{13}$ is
  isomorphic to $\cyclic 6$ while in $W(B_3\widetilde{A}_1)$ and $W(B_4)$
  the centralizer of $w_{13}$ is isomorphic to $\cyclic 6 \times \cyclic 2$.

  Notice that because $w_0$ is central in $W$, multiplication by $w_0$
  permutes the conjugacy classes of $W$ and $C_W(ww_0)= C_W(w)$ for all
  $w\in W$. In this case, multiplication by $w_0$ sends the class labeled
  $D_4$ to the class labeled $A_2$ containing $s_1s_2$. Thus we can take
  $w_{D_4}=s_1s_2w_0$.  Extending the Dynkin diagram of $W$ as in the
  Borel-De Siebenthal algorithm ~\cite{Boreldesiebenthal:groupes} by
  adjoining the reflection $s_{21}$ corresponding to the highest short root
  results in the diagram
  \begin{equation}
    \label{diag:f4ext1}
    \vcenter{\begin{xy}<.75cm,0cm>: (0,0)="1"; (1,0)="2"**\dir{-};
        (2,0)="3"**\dir{=}; ?*{>}; "3";(3,0)="4"**\dir{-};
        (4,0)="5"**\dir{-}; "1"*{\bullet}*+!U{_1}; "2"*{\bullet}*+!U{_2};
        "3"*{\bullet}*+!U{_3}; "4"*{\bullet}*+!U{_4};
        "5"*{\bullet}*+!U{_{{21}}};
      \end{xy}}.
  \end{equation}
  The subgroup generated by $\{s_4,s_{21}\}$ is a parabolic subgroup of type
  $\widetilde A_2$ and
  \[
  C_W(w_{D_4}) =\langle s_4,s_{21}\rangle\times\langle w_{D_4}\rangle
  \cong\symmetric{3}\times\cyclic{6}.
  \] 
  We define $\varphi_{D_4}$ by $\varphi_{D_4}|_{\symmetric 3} =
  \epsilon_{\symmetric 3}$ and $\varphi_{D_4}(w_{D_4})= \zeta_3$. Notice
  that using \S\ref{B4DataSection} and \S\ref{D4DataSection} we have
  \[
  \zeta_3=\varphi_{D_4} ^{D_4} (w_{D_4})= \left(\varphi_{D_4} ^{B_4}
    (w_{D_4})\right)^2 =\varphi_{D_4} ^{F_4} (w_{D_4}).
  \]

\item[$D_4(a_1)$] This class is regular and contains the conjugacy classes
  labeled by the partition $22$ in the maximal rank subgroups of $W$ of
  types $B_4$ and $D_4$. It also contains the Coxeter elements in the
  maximal rank subgroup of $W$ of type $B_2B_2$.  To find a representative
  of this class and compute its centralizer we extend the Dynkin diagram of
  $W$ by adjoining the reflection $s_{24}$ corresponding to the highest long
  root. The resulting diagram is
  \[
  \vcenter{\begin{xy}<.75cm,0cm>: (-1,0)="0"; (0,0)="1"**\dir{-};
      "1";(1,0)="2"**\dir{-}; (2,0)="3"**\dir{=}; ?*{>}; "3";
      (3,0)="4"**\dir{-}; "0"*{\bullet}*+!U{_{{24}}}; "1"*{\bullet}*+!U{_1};
      "2"*{\bullet}*+!U{_2}; "3"*{\bullet}*+!U{_3}; "4"*{\bullet}*+!U{_4};
    \end{xy}}
  \]
  and consider the maximal rank subgroup $W(B_4)$ generated by $\{s_{24},
  s_1, s_2, s_3\}$. Recall from \S\ref{B4DataSection} that the centralizer
  of $w_{22}=$ in $W(B_4)$ is generated by $c_1$ and $x_1$. Translating from
  the $B_4$ labeling to our current labeling, we set
  \[
  w_{22}=s_3s_2s_1s_2s_3s_2s_1s_{24}, \quad c_1=s_3s_2,\quad\text{and}\quad
  x_1=s_1s_2s_{24}s_1.
  \] 
  Then $w_{22}$ lies in the conjugacy class we are considering and
  $C_{W(B_4)}(w_{22})= \langle c_1, x_1\rangle$.  Although $w_{22}$ would be
  a natural representative of $D_4(a_1)$, it is more convenient to define
  $w_{D_4(a_1)}$ to be the conjugate $s_1s_2s_1 w_{22} s_1s_2s_1$ of
  $w_{22}$ because then $w_{D_4(a_1)}$ will commute with the representative
  $w_{\widetilde{A}_2A_2}$ chosen below. Define $c_1'= s_1s_2s_1 c_1
  s_1s_2s_1$ and $x_1'= s_1s_2s_1 x_1 s_1s_2s_1$. Then $C_W(w_{D_4(a_1)})$
  is generated by $\{c_1', x_1', w_{\widetilde{A}_2A_2}\}$.

  Define $\varphi_{D_4(a_1)}$ by
  \[
  \varphi_{D_4(a_1)}(c_1')=-1,\quad \varphi_{D_4(a_1)}(x_1')=1, \quad
  \text{and}\quad \varphi_{D_4(a_1)}(w_{\widetilde{A}_2A_2})=1.
  \]
  Notice that using \S\ref{B4DataSection} and \S\ref{D4DataSection} we have
  \[
  \varphi_{D_4(a_1)} ^{F_4} (w_{D_4(a_1)})= \varphi_{22}^{B_4}(w_{22})
  =(\varphi_{22}^{D_4}(w_{22}))^2.
  \]

  Using \cite[Theorem 4.2]{springer} it is easy to see that
  $C_W(w_{D_4(a_1)})$ acts on its $\zeta_4$-eigenspace as the complex
  reflection group $G_8$. This group may be described by the diagram
  $\vcenter{\begin{xy}<.5cm,0cm>:
      (0,0)+<4pt,0pt>;(2,0)-<4pt,0pt>**\dir{-};?*+!D{_3};
      (0,0)*{_4}*\xycircle<4pt>{-}*++!U{_c};
      (2,0)*{_4}*\xycircle<4pt>{-}*++!U{_d} \end{xy}}$ where $c=c_1'$ and
  $d=s_2s_3s_4s_3$. Then $\varphi_{D_4(a_1)} (c)= \varphi_{D_4(a_1)} (d)=
  -1$. Also, $\varphi_{D_4(a_1)}= (\det|_{E(\zeta_{4})})^2$.

\item[$C_3A_1$] This class is regular and contains the Coxeter elements of
  the maximal rank subgroups of $W$ of types $C_3A_1$ and $\widetilde{D}_4$
  (see \cite{douglasspfeifferroehrle:reflection}). In particular, this class
  is the image under the graph automorphism of $W$ of the class labeled by
  $D_4$ so the centralizers of elements of both classes are isomorphic to
  $\cyclic 6 \times \symmetric 3$.  We put $w_{C_3A_1}=s_3s_4w_0$ and
  compute its centralizer using the same technique used for the class
  labeled by $D_4$. We define $\varphi_{C_3A_1}$ by
  $\varphi_{C_3A_1}(w_{C_3A_1})= \zeta_3$ and $\varphi_{C_3A_1}|_{\symmetric
    3} = \epsilon$.

\item[$A_2\widetilde{A}_2$] This class is regular and contains the Coxeter
  elements of the reflection subgroups of $W$ of type $A_2 \widetilde A_2$.
  We take $w_{A_2\widetilde{A}_2}=s_1s_2s_4s_{21}$ (see
  (\ref{diag:f4ext1})). We noted above that $w_{D_4(a_1)}$ was chosen so
  that $w_{A_2\widetilde{A}_2}$ and $w_{D_4(a_1)}$ commute. Obviously
  $s_1s_2$ and $s_4s_{21}$ centralize $w_{A_2\widetilde{A}_2}$ and generate
  an elementary abelian subgroup of $C_W(w_{A_2\widetilde{A}_2})$ of order
  $9$. Then $C_W(w_{A_2\widetilde{A}_2})$ is generated by
  $\{s_1s_2,s_4s_{21}, w_{D_4(a_1)}\}$.  Define
  $\varphi_{A_2\widetilde{A}_2}$ by
  \[
  \varphi_{A_2\widetilde{A}_2 }(s_1s_2)= \zeta_3,\quad
  \varphi_{A_2\widetilde{A}_2}(s_4s_{21})= \zeta_3, \quad \text{and}\quad
  \varphi_{A_2\widetilde{A}_2}( w_{D_4(a_1)})=1.
  \]
  Then $\varphi_{A_2\widetilde{A}_2} (w_{A_2\widetilde{A}_2})= \zeta_3^2$.

  Using \cite[Theorem 4.2]{springer} it is easy to see that
  $C_W(w_{A_2\widetilde{A}_2})$ acts on its $\zeta_3$-eigenspace as the
  complex reflection group $G_5$. This group may be described by the diagram
  $\vcenter{\begin{xy}<.5cm,0cm>: (0,0)+<4pt,0pt>;(2,0) -<4pt,0pt>**\dir{-};
      ?*+!D{_4}; (0,0)*{_3}* \xycircle<4pt>{-}*++!U{_a};
      (2,0)*{_3}*\xycircle<4pt>{-}*++!U{_b} \end{xy}}$ where $a=s_1s_2$ and
  $b=s_2s_3s_2s_3s_4s_3$. Then $\varphi_{A_2\widetilde{A}_2}(a)=
  \varphi_{A_2\widetilde{A}_2} (b) =\zeta_3$. Also, $\varphi_{A_2
    \widetilde{A}_2} = \det|_{E(\zeta_{3})}$.

\item[$F_4(a_1)$] This class is regular and contains none of the elements
  of any proper reflection subgroup of $W$. The image of this class under
  multiplication by $w_0$ is the class labeled by
  $A_2\widetilde{A}_2$. Thus, $C_W\left( w_{F_4(a_1)} \right) =C_W \left(
    w_{A_2 \widetilde{A}_2} \right)$. We take $w_{F_4(a_1)}= w_{A_2
    \widetilde{A}_2}w_0$ and $\varphi_{F_4(a_1)} =\varphi_{A_2
    \widetilde{A}_2}$.  Then $w_{F_4(a_1)}$ has order six because
  $w_{A_2\widetilde{A}_2}$ has order three, so the $\zeta_6$-eigenspace of
  $w_{F_4(a_1)}$ is equal to the $\zeta_3$-eigenspace of $w_{A_2
    \widetilde{A}_2}$. Therefore, $C_W\left( w_{F_4(a_1)} \right)$ acts on
  its $\zeta_6$-eigenspace as the complex reflection group $G_5$ and
  $\varphi_{F_4(a_1)}= (\det|_{E(\zeta_{6})})^2$.

\item[$A_3\widetilde{A}_1$] This class contains the Coxeter elements in the
  reflection subgroups of $W$ of types $A_3\widetilde{A}_1$,
  $A_1\widetilde{A}_3$, $2A_1B_2$, and $2\widetilde{A}_1 B_2$ (see
  \cite{douglasspfeifferroehrle:reflection}). In addition, the image of this
  class under multiplication by $w_0$ is the class labeled $B_2$ containing
  $s_2s_3$. The reflections $s_{21}$ and $s_{24}$ corresponding to the
  highest short and long roots generate a subgroup of $W$ of type $B_2$.
  Taking $w_{A_3\widetilde{A}_1}=s_2s_3w_0$, we have
  \[
  C_W(w_{A_3\widetilde{A}_1}) = \left\langle w_{A_3
      \widetilde{A}_1}\right\rangle \times \left\langle s_{21},
    s_{24}\right\rangle \cong \cyclic{4} \times W(B_2).
  \] 
  Define $\varphi_{A_3\widetilde{A}_1}$ by $\varphi_{A_3 \widetilde{A}_1}
  (w_{A_3 \widetilde{A}_1}) =-1$ and $\varphi_{A_3
    \widetilde{A}_1}|_{W(B_2)} = \epsilon_{W(B_2)}$.
\end{description}

\bigskip With the same conventions as for $W(B_3)$, characters $\varphi_d$
satisfying the conclusion of Conjecture~B are summarized in the following
table.
\begin{multline*}
  \renewcommand{\arraystretch}{1.5}
  \begin{array}{c||c|cc|cc|cc|cc|cc|c|cc|c}
    d&4A_1& \multicolumn{2}{c|}{D_4}&  \multicolumn{2}{c|}{D_4(a_1)}
    &\multicolumn {2}{c|}{C_3A_1}& 
    \multicolumn{2}{c|}{A_2\widetilde{A}_2}&\multicolumn{2}{c|}{F_4(a_1)}&F_4&\multicolumn{2}{c|}{A_3\widetilde{A}_1}&B_4\\\hline 
    C_W(w_d)&W& \multicolumn {2}{c|} {\cyclic{6}\times \symmetric{3}}&\multicolumn{2}{c|}{G_8} &\multicolumn{2}{c|} {\cyclic{6} \times \symmetric{3}}&
    \multicolumn{2}{c|}{G_5} &\multicolumn{2}{c|}{G_5}&\cyclic{12} & \multicolumn{2}{c|}{\cyclic 4\times W(B_2)} &\cyclic{8}\\\hline 
    \text{Generators}&S&w_{D_4}&* &c&d& w_{C_3A_1}&*&
    a & b & a & b & w_{F_4} & w_{A_3\widetilde{A}_1}& * & w_{B_4}\\\hline    
    \varphi_d &\epsilon& \zeta_3&\epsilon&-1&-1&\zeta_3 & \epsilon &
    \zeta_3&\zeta_3& \zeta_3&\zeta_3 & \zeta_3 & -1 &\epsilon& \zeta_4     
  \end{array}
\end{multline*}

The values of the characters $\varphi_d^W$ together with $\rho_W^4$ and
$\omega_W^4$ are given in Table~\ref{fig:F4}.

\begin{table}[htbp]
\small
  \begin{multline*}
    \renewcommand{\arraystretch}{1.3}\arraycolsep4pt
    \begin{array}{r|rrrrrrrrrrrrr}
      &A_0&4A_1&2A_1&A_2&D_4&D_4(a_1)&\widetilde A_2&C_3A_1
      &A_2\widetilde{A}_2&F_4(a_1)&F_4&A_1&3A_1\\\hline
      \epsilon=\varphi_{4A_1}&1&1&1&1&1&1&1&1&1&1&1&-1&-1\\
      \varphi_{D_4}&32&32&\cdot&-1&-1&\cdot&2&2&-4&-4&\cdot&\cdot&\cdot\\
      \varphi_{D_4(a_1)}&12&12&4&\cdot&\cdot&8&\cdot&\cdot&6&6&2&\cdot
      &\cdot\\ 
      \varphi_{C_3A_1}&32&32&\cdot&2&2&\cdot&-1&-1&-4&-4&\cdot&-8&-8\\
      \varphi_{A_2\widetilde{A}_2}&16&16&\cdot&-2&-2&8&-2&-2&7&7&-1&
      \cdot&\cdot\\ 
      \varphi_{F_4(a_1)}&16&16&\cdot&-2&-2&8&-2&-2&7&7&-1&\cdot&\cdot\\
      \varphi_{F_4}&96&96&\cdot&\cdot&\cdot&16&\cdot&\cdot&-6&-6&-2&\cdot
      &\cdot\\
      \varphi_{A_3\widetilde{A}_1}&36&36&4&\cdot&\cdot&-12&\cdot&\cdot&\cdot
      &\cdot&\cdot&-6&-6\\ 
      \varphi_{B_4}&144&144&\cdot&\cdot&\cdot&-24&\cdot&\cdot&\cdot&\cdot
      &\cdot&\cdot&\cdot\\ \hline
      \rho_W^4&385&385&9&-2&-2&5&-2&-2&7&7&-1&-15&-15\\
      \omega_W^4&385&385&9&-2&-2&5&-2&-2&7&7&-1&15&15
    \end{array}\\ \\
    \renewcommand{\arraystretch}{1.3}\arraycolsep4pt
    \begin{array}{rrrrrrrrrrrr|l}
      A_1\widetilde{A}_2&C_{3}&A_3&\widetilde{A}_1
      &2A_1\widetilde{A}_1&A_2\widetilde{A}_1 &B_3 
      &B_2A_1&A_1\widetilde{A}_1&B_{2}&A_3\widetilde{A}_1&B_4\\\hline
      -1&-1&-1&-1&-1&-1&-1&-1&1&1&1&1&\varphi_{4A_1}=\epsilon\\
      \cdot&\cdot&\cdot&-8&-8&1&1&\cdot&\cdot&\cdot&\cdot&\cdot
      &\varphi_{D_4}\\ 
      \cdot&\cdot&\cdot&\cdot&\cdot&
      \cdot&\cdot&\cdot&\cdot&-4&-4&-2 &\varphi_{D_4(a_1)}\\ 
      1&1&\cdot&\cdot&\cdot&\cdot&\cdot&\cdot&\cdot&
      \cdot&\cdot&\cdot &\varphi_{C_3A_1}\\ 
      \cdot&\cdot&\cdot&\cdot&\cdot&\cdot&
      \cdot&\cdot&\cdot&\cdot&\cdot&\cdot &\varphi_{A_2\widetilde{A}_2}\\ 
      \cdot&\cdot&\cdot&\cdot&\cdot&\cdot&
      \cdot&\cdot&\cdot&\cdot&\cdot&\cdot &\varphi_{F_4(a_1)}\\ 
      \cdot&\cdot&\cdot&\cdot&\cdot&\cdot&\cdot&\cdot&
      \cdot&\cdot&\cdot&\cdot &\varphi_{F_4}\\ 
      \cdot&\cdot&2&-6&-6&\cdot&\cdot&2&
      \cdot&\cdot&\cdot&\cdot &\varphi_{A_3\widetilde{A}_1}\\ 
      \cdot&\cdot&\cdot&\cdot&\cdot&\cdot&\cdot&\cdot&
      \cdot&\cdot&\cdot&\cdot &\varphi_{B_4}\\ \hline
      \cdot&\cdot&1&-15&-15&\cdot&\cdot&1&1&-3&-3&-1 &\rho_W^4\\
      \cdot&\cdot&-1&15&15&\cdot&\cdot&-1&1&-3&-3&-1 &\omega_W^4
    \end{array}
  \end{multline*}
  \caption{The characters $\varphi_d^W$, $\rho_W^4$, and
    $\omega_W^4$ for $W(F_4)$}\label{fig:F4}
\end{table}

\subsection{\texorpdfstring{$W$}{W} of type \texorpdfstring{$H$}H} 

We label the conjugacy classes $C_1, C_2,\ldots$ and choose representatives
$w_1,w_2,\ldots$ as in \cite{geckpfeiffer:characters} and in \CHEVIE. For
each cuspidal class $C_i$ we write the character $\varphi_{w_i}$ as
$\varphi_i$.  When the index $i$ is fixed, we frequently denote $|w_i|$ by
$d$.

\subsubsection{\texorpdfstring{$W=W(H_3)$}{W=W(H3)}}
The cuspidal classes are $C_6$, $C_8$, $C_9$, and $C_{10}$. All these
classes are regular.

\medskip
\begin{description}
\item[$C_6$] $d=10$ and $C_W(w_6)=\langle w_6\rangle$. Define
  $\varphi_6(w_6)=\zeta_{10}$. Then $\varphi_6 = \det|_{E(\zeta_{10})}$.
 
\item[$C_8$] $d=6$ and $C_W(w_8)=\langle w_8\rangle$. Define
  $\varphi_8(w_8)=\zeta_{6}$. Then $\varphi_8 = \det|_{E(\zeta_{6})}$.

\item[$C_9$] $d=10$ and $C_W(w_9)=\langle w_9\rangle$. Define
  $\varphi_9(w_9)=\zeta_{10}$.  The elements $w_6$ and $w_9$ are related by
  $w_9=w_6^3$.  Thus the $\zeta_{10}^3$-eigenspace of $w_9$ coincides with
  the $\zeta_{10}$-eigenspace of $w_6$, and we have $\varphi_9 =
  (\det|_{E(\zeta_{10}^3)})^7$.

\item[$C_{10}$] The element $w_{10} = w_0$ is central and we take
  $\varphi_{10} =\epsilon$.
\end{description}
\bigskip 

The values of the characters $\varphi_i^W$ together with $\rho_W^3$ and
$\omega_W^3$ are given in Table~\ref{fig:H3}.

\begin{table}[htbp]
  \[
  \renewcommand{\arraystretch}{1.2}
  \begin{array}{r|rrrrrrrrrr}
    &C_1&C_2&C_3&C_4&C_5&C_6&C_7&C_8&C_9&C_{10}\\\hline
    \varphi_6&12&\cdot&\mu&\cdot&\cdot&-\nu&\nu&\cdot&-\mu&-12\\
    \varphi_8&20&\cdot&\cdot&\cdot&-1 &\cdot&\cdot&1&\cdot&-20\\
    \varphi_9&12&\cdot&\nu&\cdot&\cdot &-\mu&\mu&\cdot&-\nu&-12\\  
    \epsilon=\varphi_{10}&1&-1&1&1&1 &-1&1&-1&-1&-1\\ \hline
    \rho_W^3&45&-1&\cdot&1&\cdot &\cdot&\cdot&\cdot&\cdot&-45\\
    \omega_W^3&45&1&\cdot&1&\cdot  &\cdot&\cdot&\cdot&\cdot&45
  \end{array}
  \]
  \caption{Induced characters $W(H_3)$: $\mu=
    \zeta_5+\zeta_5^4$, $\nu=\zeta_5^2 +\zeta_5^3$}\label{fig:H3}
\end{table}

\subsubsection{\texorpdfstring{$W=W(H_4)$}{W=W(H4)}}
The cuspidal classes, the order of their elements, and the sizes of their
centralizers are listed in the next table.
\[
\begin{array}{ccc|ccc|ccc|ccc}
  w&\left|w\right|&\left|C_W\left(w\right)\right|
  &w&\left|w\right|&\left|C_W\left(w\right)\right|
  &w&\left|w\right|&\left|C_W\left(w\right)\right|
  &w&\left|w\right|&\left|C_W\left(w\right)\right|\\\hline
  w_{11}&30&30&w_{19}&10&50&w_{25}&6&36&w_{30}&10&600\\
  w_{14}&20&20&w_{21}&10&100&w_{26}&5&600&w_{31}&10&100\\
  w_{15}&15&30&w_{22}&15&30&w_{27}&5&50&w_{32}&3&360\\
  w_{17}&12&12&w_{23}&20&20&w_{28}&30&30&w_{33}&5&600\\
  w_{18}&10&600&w_{24}&6&360&w_{29}&4&240&w_{34}&2&14400\\
\end{array}
\]

For $i=11, 14, 17, 23, 28$ each of the elements $w_{i}$ is self-centralizing
and regular. We define $\varphi_i(w_i)= \zeta_d^2$ in all cases. Then,
$\varphi_i= (\det|_{E( \zeta_{d})})^2$ for $i=11, 14, 17$. For $i=23$ we
have $d= 20$, $E(\zeta_{20}^3)$ is a regular eigenspace, and $\varphi_{23}=
(\det|_{E( \zeta_{20}^3)})^{14}$.  For $i=28$ we have $d= 30$,
$E(\zeta_{30}^7)$ is a regular eigenspace, and $\varphi_{28}= (\det|_{E(
  \zeta_{30}^7)})^{26}$.

For $i=15, 22$ we have $d=15$ and $C_W(w_i)= \langle w_0w_i \rangle
\cong \cyclic{2} \times \langle w_i \rangle$. These classes are
regular. We define $\varphi_i(w_0w_i)= \zeta_{15}$ in both cases. For
$i=15$, $E(\zeta_{15})$ is a regular eigenspace. Notice that,
since $\zeta_{15} = (\zeta_{30}^{17})^{16}$, the
$\zeta_{15}$-eigenspace of $w_{15} = (w_0w_{15})^{16}$ is equal to
the $\zeta_{30}^{17}$-eigenspace
of $w_0w_{15}$. Thus $\varphi_{15}= (\det|_{E( \zeta_{15})})^{16}$. For
$i=22$, $E(\zeta_{15}^2)$ is a regular eigenspace and $\varphi_{22}=
(\det|_{E( \zeta_{15}^2)})^{8}$.

For $i=18, 26, 30, 33$ we have $C_W(w_i)= \langle w_{18},w_{19},w_{29}
\rangle$. These are regular classes and in all cases $C_W(w_i)$ acts on a
regular, two-dimensional eigenspace of $w_i$ as the complex reflection
group $G_{16}$. Define
\[
\varphi_{18}= \varphi_{33} \quad\text{by}\quad\left( w_{18}, w_{19},
  w_{29}\right) \mapsto\left( \zeta_5^2,\zeta_5^4,1\right)
\]
and
\[
\varphi_{26}= \varphi_{30} \quad \text{by}\quad\left( w_{18},w_{19},w_{29}
\right)\mapsto \left( \zeta_5^4, \zeta_5^3,1 \right).
\]
For $i=18$ we have $d=10$, $E(\zeta_{10})$ is a regular eigenspace, and
  $\varphi_{18}= (\det|_{E( \zeta_{10})})^2$.

For $i=26$ we have $d=5$, $E(\zeta_5)$ is a regular eigenspace, and
  $\varphi_{26}= (\det|_{E( \zeta_{5})})^4$.

For $i=30$ we have $d=10$, $E(\zeta_{10}^3)$ is a regular eigenspace,
  and $\varphi_{30}= (\det|_{E( \zeta_{10}^3)})^4$.

For $i=33$ we have $d=5$, $E(\zeta_5^2)$ is a regular eigenspace, and
  $\varphi_{33}= (\det|_{E( \zeta_{5}^2)})^2$.

For $i=19, 27$ we have $w_{27}= w_{19}^2$ and $C_W(w_i)= \left\langle
  w_{18}\right\rangle \times \left\langle w_{27}\right\rangle \cong
\cyclic{10} \times \cyclic{5}$. Define
\[
\varphi_{19} =\varphi_{27} \quad \text{by} \quad \left(w_{18}, w_{27}
\right) \mapsto\left( \zeta_5^3,\zeta_5^4\right).
\]

For $i=21, 31$ we have $w_{31}= w_{21}^3$ and $C_W(w_i)= \left\langle
w_{18},w_{27},s_2\right\rangle$.
Define
\[
\varphi_{21} \quad \text{by} \quad \left(w_{18},w_{27}, s_2 \right)
\mapsto\left( \zeta_5, \zeta_5^2, -1\right) \quad \text{and} \quad
  \varphi_{31}= \varphi_{21}^3.
\]

For $i=24, 32$ we have $C_W(w_i)= \langle w_{24},w_{25},w_{29}
\rangle$. Denote this group simply by $Z$. The classes $C_{24}$ and $C_{32}$
are regular. The representatives $w_{24}$ and $w_{32}$ have order $d=6$ and
$d=3$ respectively and are related by $w_{32}= w_{24}^2$.  Thus, the
$\zeta_6$-eigenspace of $w_{24}$ is equal to the $\zeta_3$-eigenspace of
$w_{32}$. Denote this vector space simply by $E$. Then $E$ is a regular,
two-dimensional eigenspace for $w_{24}$ and $w_{32}$, and $Z$ acts on $E$ as
the complex reflection group $G_{20}$. Define
\[
\varphi_{24}= \varphi_{32} \quad \text{by}\quad \left( w_{24},w_{25},w_{29}
\right) \mapsto \left( \zeta_3^2, \zeta_3,1 \right).
\]
In both cases we have $\varphi_{i}= (\det|_{E( \zeta_{6})})^2$.

For $i=25$ we have $C_W(w_{25})= \left\langle w_0w_{24},w_{25},s_2\right\rangle$.
Define
\[
\varphi_{25} \quad \text{by} \quad \left( w_0w_{24}, w_{25},s_2 \right)
\mapsto \left( \zeta_3^2, \zeta_3^2, -1 \right).
\]

For $i=29$ we have $C_W(w_{29})= \langle w_{18},w_{24},w_{29} \rangle$. This
class is regular. We have $d=4$, $E(\zeta_4)$ is a regular, two-dimensional
eigenspace, and $C_W(w_{29})$ acts on $E(\zeta_4)$ as the complex reflection
group $G_{22}$.  Define
\[
\varphi_{29} \quad \text{by}\quad \left(w_{18},w_{24},w_{29}\right) \mapsto
\left( 1,1,-1 \right).
\]
Then $\varphi_{29}= \det|_{E( \zeta_{4})}$.

Finally, for $i=34$ we have $w_{34}=w_0$ and we define $\varphi_{34}
=\epsilon$.
 
The values of the characters $\varphi_i^W$ together with $\rho_W^4$ and
$\omega_W^4$ are given in Table~\ref{fig:H4}.

\begin{table}[htbp]
  \tiny
  \begin{multline*}
  \renewcommand{\arraystretch}{1.2}\arraycolsep4pt
  \begin{array}{r|rrrrrrrrrrrrrrrrr}
    &C_{1}&C_{2}&C_{3}&C_{4}&C_{5}&C_{6} &C_{7}&C_{8}&C_{9}&C_{10}&C_{11}&C_{12}&C_{13}&C_{14} &C_{15}&C_{16}&C_{17}\\\hline 
    \varphi_{11}&480&\cdot&\cdot&\cdot&\cdot &\cdot&\cdot&\cdot&\cdot&\cdot&-\nu&\cdot&\cdot&\cdot &-\mu&\cdot&\cdot\\  
    \varphi_{14}&720&\cdot&\cdot&\cdot&\cdot&\cdot&\cdot&\cdot&\cdot&\cdot&\cdot&\cdot&\cdot&-2\nu&\cdot&\cdot&\cdot\\ 
    \varphi_{15}&480&\cdot&\cdot&\cdot& \cdot&\cdot&\cdot&\cdot&\cdot&\cdot&-\mu&\cdot &\cdot&\cdot&-\nu&\cdot&\cdot\\
    \varphi_{17}&1200&\cdot&\cdot&\cdot&\cdot&\cdot&\cdot&\cdot&\cdot&\cdot&\cdot& \cdot&\cdot&\cdot&\cdot &\cdot&2\\ 
    \varphi_{18}&24&\cdot&2\nu&\cdot&\cdot&\cdot&\cdot&\cdot&\cdot&2\mu&\mu&\cdot&\cdot&\mu&\nu &\cdot&\cdot\\
    \varphi_{19}&288&\cdot&-2&\cdot&\cdot&\cdot&\cdot&\cdot&\cdot&-2&\cdot&\cdot&\cdot&\cdot&\cdot&\cdot&\cdot\\
    \varphi_{21}&144&-12&1{-}\mu&\cdot&\cdot&-\mu&-\nu&\cdot&\cdot&1{-}\nu&\cdot&\cdot&-\mu&\cdot&\cdot&-\nu&\cdot\\ 
    \varphi_{22}&480&\cdot&\cdot&\cdot&\cdot&\cdot&\cdot&\cdot&\cdot&\cdot&-\nu&\cdot&\cdot&\cdot&-\mu&\cdot&\cdot\\ 
    \varphi_{23}&720&\cdot&\cdot& \cdot&\cdot&\cdot&\cdot&\cdot&\cdot&\cdot&\cdot&\cdot&\cdot&-2\mu&\cdot &\cdot&\cdot\\  
    \varphi_{24}&40&\cdot&\cdot&\cdot&-2&\cdot&\cdot&\cdot&\cdot&\cdot&-1&\cdot&\cdot&\cdot&-1&\cdot&-1\\
    \varphi_{25}&400&-20&\cdot&\cdot&1&\cdot&\cdot&1&\cdot&\cdot&\cdot&1&\cdot&\cdot&\cdot&\cdot&\cdot\\
    \varphi_{26}&24&\cdot&2\mu&\cdot&\cdot&\cdot&\cdot&\cdot&\cdot&2\nu&\nu&\cdot&\cdot&\nu&\mu&\cdot&\cdot\\ 
    \varphi_{27}&288&\cdot&-2&\cdot&\cdot&\cdot&\cdot&\cdot&\cdot&-2&\cdot&\cdot&\cdot&\cdot&\cdot&\cdot&\cdot\\
    \varphi_{28}&480&\cdot&\cdot&\cdot&\cdot&\cdot&\cdot&\cdot&\cdot&\cdot&-\mu&\cdot&\cdot&\cdot&-\nu&\cdot&\cdot\\ 
    \varphi_{29}&60&\cdot&\cdot&-4&\cdot&\cdot&\cdot&\cdot&\cdot&\cdot&\cdot&\cdot&\cdot&-2&\cdot&\cdot&-2\\
    \varphi_{30}&24&\cdot&2\mu&\cdot&\cdot&\cdot&\cdot&\cdot&\cdot&2\nu&\nu&\cdot&\cdot&\nu&\mu&\cdot&\cdot\\
    \varphi_{31}&144&-12&1{-}\nu&\cdot&\cdot&-\nu&-\mu&\cdot&\cdot&1{-}\mu&\cdot&\cdot&-\nu&\cdot&\cdot&-\mu&\cdot\\ 
    \varphi_{32}&40&\cdot&\cdot&\cdot&-2&\cdot&\cdot&\cdot&\cdot&\cdot&-1&\cdot&\cdot&\cdot&-1&\cdot&-1\\
    \varphi_{33}&24&\cdot&2\nu&\cdot&\cdot&\cdot&\cdot&\cdot&\cdot&2\mu&\mu&\cdot&\cdot&\mu&\nu&\cdot&\cdot\\
    \epsilon=\varphi_{34}&1&-1&1&1&1&-1&-1&-1&-1&1&1&-1&-1&1&1&-1&1\\ \hline
    \rho_W^4&6061&-45&-4&-3&-2&\cdot&\cdot&\cdot&-1&-4&-1&\cdot&\cdot&-1&-1&\cdot&-1\\
    \omega_W^4&6061&45&-4&-3&-2&\cdot&\cdot&\cdot&1&-4&-1&\cdot&\cdot&-1&-1&\cdot&-1\\
  \end{array}\\
    \renewcommand{\arraystretch}{1.2}\arraycolsep4pt
    \begin{array}{rrrrrrrrrrrrrrrrr|l}
      C_{18}&C_{19}&C_{20}&C_{21}&C_{22}&C_{23}&C_{24}&C_{25}&C_{26}&C_{27}&C_{28}&C_{29}&C_{30}&C_{31}&C_{32}&C_{33}&C_{34}&\\\hline
      20\mu&\cdot&\cdot&\cdot&-\nu&\cdot&-12&\cdot&20\nu&\cdot&-\mu&\cdot&20\nu&\cdot&-12&20\mu&480&\varphi_{11}\\ 
      30\mu&\cdot&\cdot&\cdot&\cdot&-2\mu&\cdot&\cdot&30\nu&\cdot&\cdot&-24&30\nu&\cdot&\cdot&30\mu&720&\varphi_{14}\\ 
      20\nu&\cdot&\cdot&\cdot&-\mu&\cdot&-12&\cdot&20\mu&\cdot&-\nu&\cdot&20\mu&\cdot&-12&20\nu&480&\varphi_{15}\\ 
      \cdot&\cdot &\cdot&\cdot&\cdot&\cdot&-30&\cdot&\cdot&\cdot&\cdot&-40&\cdot&\cdot&-30&\cdot&1200&\varphi_{17}\\ 
      11{-}\mu&-1&\cdot&2\mu&\mu&\nu&12&\cdot&11{-}\nu&-1&\nu&12&11{-}\nu&2\nu&12&11{-}\mu&24&\varphi_{18}\\ 
      -12&3&\cdot&-2&\cdot&\cdot&\cdot&\cdot&-12&3&\cdot&\cdot&-12&-2&\cdot&-12&288&\varphi_{19}\\
      12\mu&-1&-12&1{-}\nu&\cdot&\cdot&\cdot&\cdot&12\nu&-1&\cdot&\cdot&12\nu&1{-}\mu&\cdot&12\mu&144&\varphi_{21}\\ 
      20\mu&\cdot&\cdot&\cdot&-\nu&\cdot&-12&\cdot&20\nu&\cdot&-\mu&\cdot&20\nu&\cdot&-12&20\mu&480&\varphi_{22}\\ 
      30\nu&\cdot&\cdot&\cdot&\cdot&-2\nu&\cdot&\cdot&30\mu&\cdot&\cdot&-24&30\mu&\cdot&\cdot&30\nu&720&\varphi_{23}\\ 
      20&\cdot&\cdot&\cdot&-1&\cdot&19&-2&20&\cdot&-1&20&20&\cdot&19&20&40&\varphi_{24}\\
      \cdot&\cdot&-20&\cdot&\cdot&\cdot&-20&1&\cdot&\cdot&\cdot&\cdot&\cdot&\cdot&-20&\cdot&400&\varphi_{25}\\
      11{-}\nu&-1&\cdot&2\nu&\nu&\mu&12&\cdot&11{-}\mu&-1&\mu&12&11{-}\mu&2\mu&12&11{-}\nu&24&\varphi_{26}\\ 
      -12&3&\cdot&-2 &\cdot&\cdot&\cdot&\cdot&-12&3&\cdot&\cdot&-12&-2&\cdot&-12&288&\varphi_{27}\\
      20\nu&\cdot&\cdot&\cdot&-\mu&\cdot&-12&\cdot&20\mu&\cdot&-\nu&\cdot&20\mu&\cdot&-12&20\nu&480&\varphi_{28}\\ 
      30&\cdot&\cdot&\cdot&\cdot&-2&30&\cdot&30&\cdot&\cdot&28&30&\cdot&30&30&60&\varphi_{29}\\
      11{-}\nu&-1&\cdot&2\nu&\nu&\mu&12&\cdot&11{-}\mu&-1&\mu&12&11{-}\mu&2\mu&12&11{-}\nu&24&\varphi_{30}\\ 
      12\nu&-1&-12&1{-}\mu&\cdot&\cdot&\cdot&\cdot&12\mu&-1&\cdot&\cdot&12\mu&1{-}\nu&\cdot&12\nu&144&\varphi_{31}\\ 
      20&\cdot&\cdot&\cdot&-1&\cdot&19&-2&20&\cdot&-1&20&20&\cdot&19&20&40&\varphi_{32}\\
      11{-}\mu&-1&\cdot&2\mu&\mu&\nu&12&\cdot&11{-}\nu&-1&\nu&12&11{-}\nu&2\nu&12&11{-}\mu&24&\varphi_{33}\\ 
      1&1&-1&1&1&1&1&1&1&1&1&1&1&1&1&1&1&\varphi_{34}\\ \hline
      11&1&-45&-4&-1&-1&19&-2&11&1&-1&29&11&-4&19&11&6061&\rho_W^4\\
      11&1&45&-4&-1&-1&19&-2&11&1&-1&29&11&-4&19&11&6061&\omega_W^4\\
    \end{array}
  \end{multline*}
    \caption{Induced characters for $W\left(H_4\right)$:
    $\mu=\zeta_5+\zeta_5^4$, $\nu=\zeta_5^2 +\zeta_5^3$}\label{fig:H4}
\end{table}

\appendix
\section{Bulky Parabolic Subgroups}\label{a:bulky}
For each finite irreducible Coxeter group $W$ the following table lists
the types of all bulky parabolic subgroups of $W$ other than $W$ itself,
the trivial subgroup, and the subgroup of type $A_1$. This information
has been extracted from the results in \cite{howlett:normalizers}.

\[
\renewcommand{\arraystretch}{1.3}
\begin{array}{c|l}
  W&\text{Bulky Parabolic Subgroups}\\\hline
  A_n&\text{$A_{n_1}A_{n_2}\cdots A_{n_k}$ with
  $n_i$ distinct and $\sum_{i=1}^kn_i\le n+k-1$}\\
  B_n&
  \text{$B_j$ with $1\le j\le n-1$},\quad 
  \text{$A_1B_j$ with $1\le j\le n-2$}\\ 
  D_n,\ \text{$n$ even}&A_1 D_{n-2}\\
  D_n,\ \text{$n$ odd}&A_1 D_{n-2},\quad A_1 A_{n-3},\quad A_{n-1}\\ 
  E_6&A_1 A_2,\quad A_1 A_3,\quad A_4,\quad A_1 A_4,\quad A_5,\quad D_5\\ 
  E_7&D_6\\
  E_8&E_7\\
  F_4&\widetilde{A}_1,\quad A_1\widetilde{A}_1,\quad B_2,\quad
  B_3,\quad C_3\\ 
  H_3&A_1^2\\
  H_4&H_3\\
  I_2(m),\ \text{$m$ even}&\widetilde{A}_1
\end{array}
\]

\textbf{Acknowledgement.} We acknowledge
support from the DFG-priority program SPP1489 ``Algorithmic
and Experimental Methods in Algebra, Geometry, and Number Theory''.  The
third author also wishes to thank Science Foundation Ireland for its support.


\end{document}